\definecolor{webgreen}{rgb}{0,.5,0}
\definecolor{webbrown}{rgb}{.6,0,0}
\definecolor{shadecolor}{gray}{0.90}				
\def\boitegrise#1#2{\begin{centerline}{\fcolorbox{black}{shadecolor}{~
    \begin{minipage}[t]{#2}{\vphantom{~}#1\vphantom{$A_{\displaystyle{A_A}}$}}
            \end{minipage}~}}\end{centerline}\medskip}
\newcommand{\pmodd}[1]{\!\!\!\pmod{#1}}	
\newcommand{\PellSeq}{\left(P_n\right)_{n \geq 0}}
\newcommand{\QellSeq}{\left(Q_n\right)_{n \geq 0}}
\newcommand{\BalSeq}{\left(B_n\right)_{n \geq 0}}
\newcommand{\CobalSeq}{\left(b_n\right)_{n \geq 0}}
\newcommand{\LucBalSeq}{\left(C_n\right)_{n \geq 0}}
\newcommand{\LucCobalSeq}{\left(c_n\right)_{n \geq 0}}
\newcommand{\GenericSeq}{\left(S_n\right)_{n \geq 0}}
\newcommand{\FibSum}{\mathcal{F}(k)}
\newcommand{\seqnum}[1]{\href{https://oeis.org/#1}{\rm \underline{#1}}}
\DeclareFontFamily{U}{BOONDOX-calo}{\skewchar\font=45 }
\DeclareFontShape{U}{BOONDOX-calo}{m}{n}{
  <-> s*[1.05] BOONDOX-r-calo}{}
\DeclareFontShape{U}{BOONDOX-calo}{b}{n}{
  <-> s*[1.05] BOONDOX-b-calo}{}
\DeclareMathAlphabet{\mathcalb}{U}{BOONDOX-calo}{m}{n}
\SetMathAlphabet{\mathcalb}{bold}{U}{BOONDOX-calo}{b}{n}
\DeclareMathAlphabet{\mathbcalb}{U}{BOONDOX-calo}{b}{n}
\newcommand{\curl}[2]{\mathcalb{#1}^{#2}(k)}
\newcommand{\red}[1]{\textcolor{red}{#1}}
\newcommand{\blue}[1]{\textcolor{blue}{#1}}
\newcommand{\redbf}[1]{\textcolor{red}{\textbf{#1}}}
\definecolor{lightgray}{rgb}{0.90, 0.90, 0.90}
\begin{document}


\theoremstyle{plain}
\newtheorem{theorem}{Theorem}
\newtheorem{corollary}[theorem]{Corollary}
\newtheorem{lemma}[theorem]{Lemma}
\newtheorem{proposition}[theorem]{Proposition}
\newtheorem{result}[theorem]{Partial Result}

\theoremstyle{definition}
\newtheorem{definition}[theorem]{Definition}
\newtheorem{example}[theorem]{Example}
\newtheorem{conjecture}[theorem]{Conjecture}
\newtheorem{question}[theorem]{Question}
\newtheorem{convention}[theorem]{Convention}

\theoremstyle{remark}
\newtheorem{remark}[theorem]{Remark}

\begin{center}
\vskip 1cm{\LARGE\bf
Pell and Associated Pell Braid Sequences as GCDs of Sums of $k$ Consecutive Pell, Balancing, and Related Numbers
}
\vskip 1cm
aBa Mbirika\footnote{corresponding author} and Janee Schrader\\
Department of Mathematics\\
University of Wisconsin-Eau Claire\\
Eau Claire, WI 54702\\
USA \\
\href{mailto:mbirika@uwec.edu}{\tt mbirika@uwec.edu}\\
\href{mailto:schradjm5500@uwec.edu}{\tt schradjm5500@uwec.edu}\\
\ \\
J\"{u}rgen Spilker\\
Institute of Mathematics\\
University of Freiburg\\ 
79085 Freiburg im Breisgau \\
Germany\\
\href{mailto:juergen.spilker@t-online.de }{\tt juergen.spilker@t-online.de}
\end{center}

\vskip .2in

\begin{abstract}
We consider the greatest common divisor (GCD) of all sums of $k$ consecutive terms of a sequence $(S_n)_{n\geq 0}$ where the terms $S_n$ come from exactly one of following six well-known sequences' terms: Pell $P_n$, associated Pell $Q_n$, balancing $B_n$, Lucas-balancing $C_n$, cobalancing $b_n$, and Lucas-cobalancing $c_n$ numbers. For each of the six GCDs, we provide closed forms dependent on $k$. Moreover, each of these closed forms can be realized as braid sequences of Pell and associated Pell numbers in an intriguing manner. We end with partial results on GCDs of sums of squared terms and open questions.
\end{abstract}

\section{Introduction}

In 2021, Guyer and Mbirika gave closed forms for the greatest common divisor (GCD) of all sums of $k$ consecutive generalized Fibonacci numbers~\cite{Guyer_Mbirika2021}. Further, in 2022, Mbirika and Spilker generalized those results to the setting of the GCD of all sums of $k$ consecutive squares of generalized Fibonacci numbers~\cite{Mbirika_Spilker2022}. In this current paper, we extend the results of 2021 to the following six well-known sequences: Pell $\PellSeq$, associated Pell $\QellSeq$, balancing $\BalSeq$, Lucas-balancing $\LucBalSeq$, cobalancing $\CobalSeq$, and Lucas-cobalancing $\LucCobalSeq$. Moreover, these GCDs can be realized as braid sequences. A \textit{braid sequence} arises when we intertwine two sequences. For example in Figure~\ref{fig:Pell_Qell_braid}, we intertwine the sequence  $\left(P_n\right)_{n \geq 1}$ in the top row with the associated Pell sequence $\left(Q_n\right)_{n \geq 1}$ in the bottom row.
\begin{figure}[H]
    \centering
\begin{tikzpicture}
\matrix (m) [matrix of math nodes,
             nodes in empty cells,
             nodes={minimum height=4ex,text depth=0.5ex},
             column sep={1.75cm,between origins},
             row sep={1.5cm,between origins}]
{
P_1 & P_2 & P_3 & P_4 & P_5 & P_6 & P_7 & P_8 & \cdots\\
Q_1 & Q_2 & Q_3 & Q_4 & Q_5 & Q_6 & Q_7 & Q_8 & \cdots\\
};
\draw[blue, very thick,->]	(m-1-1) -- node[fill=white, circle] {} (m-2-2);
\draw[red, very thick, dotted, ->]	(m-2-1) -- (m-1-2);
\draw[red,very thick, dotted, ->]	(m-1-2) -- node[fill=white, circle] {} (m-2-3);
\draw[blue, very thick,->]	(m-2-2) --  (m-1-3);

\draw[blue, very thick,->]	(m-1-3) -- node[fill=white, circle] {} (m-2-4);
\draw[red, very thick, dotted, ->]	(m-2-3) -- (m-1-4);
\draw[red, very thick, dotted, ->]	(m-1-4) -- node[fill=white, circle] {} (m-2-5);
\draw[blue,very thick,->]	(m-2-4) -- (m-1-5);

\draw[blue, very thick,->]	(m-1-5) -- node[fill=white, circle] {} (m-2-6);
\draw[red, very thick, dotted, ->]	(m-2-5) -- (m-1-6);
\draw[red,very thick, dotted, ->]	(m-1-6) -- node[fill=white, circle] {} (m-2-7);
\draw[blue, very thick,->]	(m-2-6) -- (m-1-7);

\draw[blue, very thick,->]	(m-1-7) -- node[fill=white, circle] {} (m-2-8);
\draw[red, very thick, dotted, ->]	(m-2-7) -- (m-1-8);
\draw[red,very thick, dotted, ->]	(m-1-8) -- node[fill=white, circle] {} (m-2-9);
\draw[blue, very thick,->]	(m-2-8) -- (m-1-9);
\end{tikzpicture}
    \caption{The braiding of $\left(P_n\right)_{n \geq 1}$ and $\left(Q_n\right)_{n \geq 1}$.}
    \label{fig:Pell_Qell_braid}
\end{figure}
\noindent The red-dotted path is the sequence $\bigl(\gcd(B_n, b_n) \bigr)_{n \geq 1}$ and the blue-solid path is the sequence $\bigl(\gcd(B_n, b_n + 1) \bigr)_{n \geq 1}$. Both braid sequences easily follow from the identities
$$ B_n = P_n Q_n, \;\;\;
    b_n = \begin{cases}
  P_n Q_{n-1},  & \text{if $n$ is even}; \\
  P_{n-1} Q_n,  & \text{if $n$ is odd},
\end{cases} \;\;\;
    b_n + 1 = \begin{cases}
  P_{n-1} Q_n,  & \text{if $n$ is even}; \\
  P_n Q_{n-1},  & \text{if $n$ is odd},
\end{cases} $$
and the facts that $\gcd(P_{n-1}, P_n) = 1$ and $\gcd(Q_{n-1}, Q_n) = 1$.

\boitegrise{
\begin{convention}\label{conv:notation_for_curl_GCD_closed_forms}
When $\GenericSeq$ is any of the six sequences $\PellSeq$, $\QellSeq$, $\BalSeq$, $\LucBalSeq$, $\CobalSeq$, or $\LucCobalSeq$, we establish the notation $\curl{S}{m}$ to denote the GCD of all sums of $k$ consecutive $m^{\mathrm{th}}$ powers of sequence terms, in the respective six settings:
\begin{align*}
    \curl{P}{m} = \gcd\left\lbrace \left( \sum_{i=1}^k P_{n+i}^m \right)_{n \geq 0}\right\rbrace, &\hspace{.25in} & \curl{Q}{m} = \gcd\left\lbrace \left( \sum_{i=1}^k Q_{n+i}^m \right)_{n \geq 0}\right\rbrace,\\
    \curl{B}{m} = \gcd\left\lbrace \left( \sum_{i=1}^k B_{n+i}^m \right)_{n \geq 0}\right\rbrace, &\hspace{.25in} & \curl{C}{m} = \gcd\left\lbrace \left( \sum_{i=1}^k C_{n+i}^m \right)_{n \geq 0}\right\rbrace,\\
    \curl{b}{m} = \gcd\left\lbrace \left( \sum_{i=1}^k b_{n+i}^m \right)_{n \geq 0}\right\rbrace, &\hspace{.25in} & \curl{c}{m} = \gcd\left\lbrace \left( \sum_{i=1}^k c_{n+i}^m \right)_{n \geq 0}\right\rbrace.
\end{align*}
\begin{remark}
    When $m=1$, we omit the superscript and simply write $\curl{S}{}$.
\end{remark}
\end{convention}}{0.95\textwidth}

In this paper we give closed forms for the braid sequence of GCD-values $\bigl(\curl{S}{m}\bigr)_{k \geq 1}$ when $m=1$ for each of the six sequences and give partial results for when $m=2$. We chose these six sequences, in particular, since the GCD-values $\bigl(\curl{S}{}\bigr)_{k \geq 1}$ for each of the six sequences all involve Pell and associated Pell numbers in an intriguing manner. The breakdown of this paper is as follows. In Section~\ref{sec:definitions}, we provide definitions of the six sequences and some historical origins of the relatively newer sequences $\BalSeq$, $\LucBalSeq$, $\CobalSeq$, and $\LucCobalSeq$. In Section~\ref{sec:identities}, we give preliminary identities used to prove our main results, which are in Sections~\ref{sec:main_results} and \ref{sec:main_results_for_cobalancing_sequence}. Finally, in Section~\ref{sec:further_results_and_open_questions},  we address our progress towards the $m=2$ setting and provide some open questions.

\subsection{Motivation: a new proof of an old result}

At the $20^{\mathrm{th}}$ \textit{International Conference on Fibonacci Numbers and Their Applications} in Sarajevo in 2022, Mbirika presented his and collaborator Guyer's results on the GCD of all sums of $k$ consecutive generalized Fibonacci numbers~\cite{Guyer_Mbirika2021}. Conference participant Florian Luca communicated to Mbirika an observation that leads to a simple proof in the Fibonacci setting when $k$ is even. In this setting, the Guyer-Mbirika result was the following:
$$\FibSum = \begin{cases}
F_{k/2}, &\text{if $k \equiv 0 \pmodd 4$}; \\
L_{k/2}, &\text{if $k \equiv 2 \pmodd 4$}, 
\end{cases}$$
where $\FibSum$ denotes the GCD of all sums of $k$ consecutive Fibonacci numbers. For ease of notation, set $\sigma_F(k,n)$ to be the sum $\sum_{i=1}^k F_{n+i}$. Luca's observations was the following:
\begin{align}
\sigma_F(k,n) = F_{n+k+2} - F_{n+2} =  \begin{cases}
    F_{k/2} L_{\left(k/2+2\right)+n}, &\text{if $k \equiv 0 \pmodd{4}$};\\
    L_{k/2} F_{\left(k/2+2\right)+n}, &\text{if $k \equiv 2 \pmodd{4}$}.
     \end{cases}\label{eq:Luca_observation}
\end{align}
From this identity, the Guyer-Mbirika result is easily proven. For example for $k=20$, using Identity~\eqref{eq:Luca_observation} and the fact that consecutive Lucas numbers are relatively prime, we have
\begin{align*}
    \mathcal{F}(20) = \gcd(\sigma_F(20,0), \sigma_F(20,1), \sigma_F(20,2), \ldots) &= \gcd(F_{10} L_{12}, F_{10} L_{13}, F_{10} L_{14}, \ldots)\\
        &= F_{10} \cdot \gcd(L_{12}, L_{13}, L_{14}, \ldots)\\
        &= F_{10},
\end{align*}
as expected. For Identity~\eqref{eq:Luca_observation}, Luca noted that the first equality is easily shown if we utilize the fact that $\sum_{i=1}^{k} F_{i} = F_{k+2} - 1$, and the second equality follows by the known result
\begin{align}
    F_a - F_b &= \begin{cases}
    F_{\frac{a-b}{2}} L_{\frac{a+b}{2}}, &\text{if $a-b \equiv 0 \pmodd{4}$};\\
    L_{\frac{a-b}{2}} F_{\frac{a+b}{2}}, &\text{if $a-b \equiv 2 \pmodd{4}$},
     \end{cases}\label{eq:Luca_known_result}
\end{align}
if we set $a:=n+k+2$ and $b:=n+2$. Identity~\eqref{eq:Luca_known_result} follows directly from a 1963 result proven by Ruggles~\cite[p.~77]{Ruggles1963}. In this current paper, we generalize  Identity~\eqref{eq:Luca_known_result} into the Pell and associated Pell settings in Lemmas~\ref{lem:Pell_s_plus_r} and \ref{lem:Qell_s_plus_r}. Moreover, using the latter two lemmas we generalize Identity~\eqref{eq:Luca_observation} to compute the sums $\sigma_S(k,n)$ for five (of our six) different sequences $(S_n)_{n \geq 0}$ in one of our main results given in Theorem~\ref{thm:sigma_S_cute}.

\section{Definitions of the six sequences and some remarks}\label{sec:definitions}

We first recall the recursive definitions of the six sequences used in this paper, and then we follow with their well-known Binet forms.

\begin{definition}\label{def:Pell_Qell_numbers}
The \textit{Pell sequence} $\PellSeq$ and the \textit{associated Pell sequence} $\QellSeq$ are defined by the recurrence relations $P_n = 2 P_{n-1} + P_{n-2}$ and $Q_n = 2 Q_{n-1} + Q_{n-2}$, respectively,
with initial conditions $P_0 = 0$, $P_1 = 1$, $Q_0 = 1$, and $Q_1 = 1$. In the OEIS, these are sequences \seqnum{A000129} and \seqnum{A001333}, respectively~\cite{Sloane-OEIS}.
\end{definition}

\begin{remark}
In the literature, there is unfortunately some discrepancy on the precise definition of the \textit{Pell-Lucas sequence}. Though many sources attribute the OEIS sequence \seqnum{A002203} as the ``companion Pell sequence'' (or equivalently, the Pell-Lucas sequence), we choose to follow Koshy~\cite{Koshy2014} and many others in the literature who define the Pell-Lucas sequence as we have done in Definition~\ref{def:Pell_Qell_numbers} and call $\QellSeq$ the ``associated Pell sequence''.
\end{remark}

\begin{remark}
The associated Pell (respectively, Pell) sequence is the sequence of numerators (respectively, denominators) of the rational convergents to $\sqrt{2}$; that is, $\lim\limits_{n \rightarrow \infty} \frac{Q_n}{P_n} = \sqrt{2}$.
\end{remark}

Before we give the recursive definition of the remaining four sequences, we first discuss how these four sequences were originally defined. In 1999, Behera and Panda~\cite{Behera-Panda1999} defined an integer $n \in \mathbb{N}$ to be a balancing number if it is a solution to the Diophantine equation
\begin{align}
1 + 2 + \cdots + (n-1) = (n+1) + (n+2) + \cdots + (n + r), \label{eq:balancing_Diophantine}
\end{align}
where $r$ is the balancer corresponding to $n$. The terms in the sequence of balancing numbers and their corresponding balancers are denoted $B_n$ and $R_n$, respectively. For example, $B_2 = 6$ and $R_2 = 2$ since $1 + 2 + \cdots + 5 = 7 + 8$. Later in 2005, Panda and Ray~\cite{Panda_Ray2005} slightly modified Equation~\eqref{eq:balancing_Diophantine} to the Diophantine equation
\begin{align}
1 + 2 + \cdots + n = (n+1) + (n+2) + \cdots + (n + r). \label{eq:cobalancing_Diophantine}
\end{align}
In this new setting, they called the value $n$ a cobalancing number and the corresponding $r$ a cobalancer. The terms in the sequence of cobalancing numbers and their corresponding cobalancers are denoted $b_n$ and $r_n$, respectively. It turns out that every balancer is also a cobalancing number in the following sense: $R_n = b_n$. Moreover, every cobalancer is also a balancing number in the following sense: $r_{n+1} = B_n$. Hence, in this paper we consider the sequences $\BalSeq$ and $\CobalSeq$ and neither $(R_n)_{n \geq 0}$ nor $(r_n)_{n \geq 0}$.

Behera and Panda also showed that $B_n$ is a balancing number if and only if $8 B_n^2 + 1$ is a perfect square. So consider the sequence, denoted $\LucBalSeq$, of positive roots of $\sqrt{8 B_n^2 + 1}$ for each $n \geq 0$. This sequence is called the Lucas-balancing sequence and is named so since the value $C_n$ is associated to $B_n$ in many manners similar to the relationship between $L_n$ and $F_n$~\cite{Panda2009}. Lastly, it is known that $b_n$ is a cobalancing number if and only if $8 b_n^2 + 8 b_n + 1$ is a perfect square. So consider the sequence, denoted $\LucCobalSeq$, of positive roots of $\sqrt{8 b_n^2 + 8 b_n + 1}$ for each $n \geq 1$ and set $c_0 := -1$. This sequence is called the Lucas-cobalancing sequence.

We now give the recursive definitions of the four sequences $\BalSeq$, $\LucBalSeq$, $\CobalSeq$, and $\LucCobalSeq$. Then in Table~\ref{table:Pell_Qell_numbers}, we give the first eleven terms of each of the six sequences.

\begin{definition}\label{def:balancing_and_cobalancing_numbers}
The \textit{balancing sequence} $\BalSeq$ and the \textit{Lucas-balancing sequence} $\LucBalSeq$ are defined by the recurrence relations $B_n = 6 B_{n-1} - B_{n-2}$ and $C_n = 6 C_{n-1} - C_{n-2}$, respectively,
with initial conditions $B_0 = 0$, $B_1 = 1$, $C_0 = 1$, and $C_1 = 3$. In the OEIS, these are sequences \seqnum{A001109} and \seqnum{A001541}, respectively~\cite{Sloane-OEIS}.
\end{definition}

\begin{definition}\label{def:Lucas_balancing_and_Lucas_cobalancing_numbers}
The \textit{cobalancing sequence} $\CobalSeq$ and the \textit{Lucas-cobalancing sequence} $\LucCobalSeq$ are defined by the recurrence relations $b_n = 6 b_{n-1} - b_{n-2} + 2$ and $c_n = 6 c_{n-1} - c_{n-2}$, respectively,
with initial conditions $b_0 = 0$, $b_1 = 0$, $c_0 = -1$, and $c_1 = 1$. In the OEIS, these are sequences \seqnum{A053141} and \seqnum{A002315}, respectively~\cite{Sloane-OEIS}.
\end{definition}

Finally, let $\gamma = 1 + \sqrt{2}$ and $\delta = 1 - \sqrt{2}$. Then we have the following well-known Binet forms for the sequence terms $P_n$, $Q_n$, $B_n$, $C_n$, $b_n$, and $c_n$, respectively:

\begin{multicols}{3}
\begin{itemize}
    \item[] $P_n = \frac{\gamma^n - \delta^n}{2 \sqrt{2}}$,
    \item[] $Q_n = \frac{\gamma^n + \delta^n}{2}$,
    \item[] $B_n = \frac{\gamma^{2n} - \delta^{2n}}{4 \sqrt{2}}$,
    \item[] $C_n = \frac{\gamma^{2n} + \delta^{2n}}{2}$,
    \item[] $b_n = \frac{\gamma^{2n-1} - \delta^{2n-1}}{4 \sqrt{2}} - \frac{1}{2}$,
    \item[] $c_n = \frac{\gamma^{2n-1} + \delta^{2n-1}}{2}$.
\end{itemize}
\end{multicols}

\begin{table}[H]
\renewcommand{\arraystretch}{1.4}
\centering
\begin{tabular}{|c||c|c|c|c|c|c|c|c|c|c|c|c|}
\hline
\blue{$n$} & \redbf{0} & \redbf{1} & \redbf{2} & \redbf{3} & \redbf{4} & \redbf{5} & \redbf{6} & \redbf{7} & \redbf{8} & \redbf{9} & \redbf{10}\\ \hline\hline
\rowcolor{lightgray}
\blue{$P_n$} & 0 & 1 & 2 & 5 & 12 & 29 & 70 & 169 & 408 & 985 & 2378\\ \hline
\blue{$Q_n$} & 1 & 1 & 3 & 7 & 17 & 41 & 99 & 239 & 577 & 1393 & 3363\\ \hline\hline
\rowcolor{lightgray}
\blue{$B_n$} & 0 & 1 & 6 & 35 & 204 & 1189 & 6930 & 40391 & 235416 & 1372105 & 7997214\\ \hline
\blue{$C_n$} & 1 & 3 & 17 & 99 & 577 & 3363 & 19601 & 114243 & 665857 & 3880899 & 22619537\\ \hline\hline
\rowcolor{lightgray}
\blue{$b_n$} & 0 &	0 & 2 & 14 & 84 & 492 & 2870 & 16730 & 97512 & 568344 & 3312554\\ \hline
\blue{$c_n$} & $-1$ & 1 & 7 & 41 & 239 & 1393 & 8119 & 47321 & 275807 & 1607521 & 9369319\\ \hline
\end{tabular}
\caption{The first 11 Pell $P_n$, associated Pell $Q_n$, balancing $B_n$, cobalancing $b_n$, Lucas-balancing $C_n$, and Lucas-cobalancing $c_n$ numbers.}
\label{table:Pell_Qell_numbers}
\end{table}

\section{Some old and new identities}\label{sec:identities}
In this section, we provide the preliminary identities used to prove our main results in Sections~\ref{sec:main_results} and \ref{sec:main_results_for_cobalancing_sequence}. Some of these identities are well known, but most are new.

\subsection{Sum identities, Cassini's identities, and GCD identities}\label{subsec:some_known_identities}

The following lemma follows from results in Koshy's book~\cite{Koshy2014}, the Binet formulas given in Section~\ref{sec:definitions}, or Catarino et al.~\cite{Catarino2015}.

\begin{lemma}\label{lem:sum_S_i}
For all $k \geq 1$, the following sum identities hold for the six sequences Pell $(P_n)_{n\geq 0}$, associated Pell $(Q_n)_{n\geq 0}$, balancing $(B_n)_{n\geq 0}$, Lucas-balancing $(C_n)_{n\geq 0}$,cobalancing $(b_n)_{n\geq 0}$, and Lucas-cobalancing $(c_n)_{n\geq 0}$:

\begin{minipage}[b][25ex][t]{0.4\textwidth}
\begin{align}
    \sum_{i=1}^k P_i &= \frac{1}{2} ( Q_{k+1} - 1 )\label{eq:sum_P_i}, \\
    \sum_{i=1}^k Q_i &= P_{k+1} - 1, \label{eq:sum_Q_i} \\
    \sum_{i=1}^k B_i &= \frac{1}{4} ( P_{2k+1} - 1 ),\label{eq:sum_B_i}
\end{align}
\end{minipage}
\hspace{.4in}
\begin{minipage}[b][25ex][t]{0.4\textwidth}
\begin{align}
    \sum_{i=1}^k C_i &= \frac{1}{2} ( Q_{2k+1} - 1 ),\label{eq:sum_C_i} \\
    \sum_{i=1}^{k} b_{i} &= \frac{1}{4}(b_{k+1} - b_k - 2k),\label{eq:sum_b_i} \\
    \sum_{i=1}^k c_i &= \frac{1}{2} ( Q_{2k} - 1 ).\label{eq:sum_c_i} 
\end{align}
\end{minipage}
\end{lemma}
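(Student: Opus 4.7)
The plan is to prove all six identities uniformly by substituting the Binet forms from Section~\ref{sec:definitions} and evaluating the resulting geometric series, falling back on induction only where the Binet form has an additive constant. Set $\gamma = 1+\sqrt 2$ and $\delta = 1-\sqrt 2$, so that $\gamma + \delta = 2$, $\gamma\delta = -1$, and (crucially) $\gamma - 1 = \sqrt 2$ and $\delta - 1 = -\sqrt 2$. These last two identities give the clean closed-form geometric sums
\[\sum_{i=1}^k \gamma^i \;=\; \frac{\gamma^{k+1} - \gamma}{\sqrt 2}, \qquad \sum_{i=1}^k \delta^i \;=\; -\,\frac{\delta^{k+1} - \delta}{\sqrt 2},\]
which will serve as the only real computational input.

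For Identities~\eqref{eq:sum_P_i} and \eqref{eq:sum_Q_i}, the plan is to substitute $P_i = (\gamma^i - \delta^i)/(2\sqrt 2)$ and $Q_i = (\gamma^i + \delta^i)/2$, apply the two geometric sums above, and regroup so that the combinations $\gamma^{k+1}+\delta^{k+1} = 2Q_{k+1}$ and $\gamma^{k+1}-\delta^{k+1} = 2\sqrt 2 P_{k+1}$ appear; the leftover constants $\gamma+\delta = 2$ and $\gamma - \delta = 2\sqrt 2$ produce the $-1$ terms in the stated closed forms. Identities~\eqref{eq:sum_B_i}, \eqref{eq:sum_C_i}, and \eqref{eq:sum_c_i} follow by the same recipe, with two cosmetic tweaks: for balancing and Lucas-balancing the Binet forms involve $\gamma^{2i}, \delta^{2i}$, so one replaces $\gamma$ by $\gamma^2$ throughout the geometric sum and recognizes the Pell/associated-Pell value at odd index $2k+1$; for Lucas-cobalancing, one factors a single $\gamma$ (respectively $\delta$) out of $\gamma^{2i-1}$ before summing, which is what produces the even index $2k$ rather than $2k+1$.

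Identity~\eqref{eq:sum_b_i} for the cobalancing sequence is the step I expect to require the most care, because the Binet form $b_n = (\gamma^{2n-1}-\delta^{2n-1})/(4\sqrt 2) - 1/2$ contributes a bulk $-k/2$ to the partial sum, and the right-hand side is stated as a telescoping difference $b_{k+1}-b_k$ rather than a single sequence value. The cleanest route is induction on $k$. The base case $k=1$ is immediate from Table~\ref{table:Pell_Qell_numbers}: $b_1 = 0 = \tfrac{1}{4}(b_2 - b_1 - 2)$. For the inductive step, I would add $b_{k+1}$ to both sides of the induction hypothesis, clear the factor of $4$, and observe that matching the target identity at $k+1$ reduces precisely to the recurrence $b_{k+2} = 6 b_{k+1} - b_k + 2$ from Definition~\ref{def:Lucas_balancing_and_Lucas_cobalancing_numbers}; the additive $+2$ in that recurrence is exactly what manufactures the $-2k$ term in the closed form. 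A direct Binet-form computation is also possible, but tracking how the $-k/2$ contribution telescopes against $(b_{k+1}-b_k)/4$ makes induction markedly tidier.
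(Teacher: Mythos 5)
Your proposal is correct, but it takes a genuinely different route from the paper. The paper's proof is largely a patchwork of citations: Identities~\eqref{eq:sum_P_i}, \eqref{eq:sum_Q_i}, and \eqref{eq:sum_b_i} are quoted directly from Koshy and from Catarino et al., and for \eqref{eq:sum_B_i}, \eqref{eq:sum_C_i}, and \eqref{eq:sum_c_i} the Binet forms are used only to observe $B_i = \tfrac{1}{2}P_{2i}$, $C_i = Q_{2i}$, $c_i = Q_{2i-1}$, after which known closed forms for sums of even- and odd-indexed Pell and associated Pell numbers are invoked. Your approach is instead fully self-contained: the five ``nice'' identities all fall out of the two geometric sums $\sum_{i=1}^k \gamma^i = (\gamma^{k+1}-\gamma)/\sqrt{2}$ and $\sum_{i=1}^k \delta^i = -(\delta^{k+1}-\delta)/\sqrt{2}$, and I have checked that the regrouping into $2Q_{k+1}$, $2\sqrt{2}\,P_{k+1}$, etc.\ produces exactly the stated right-hand sides; your induction for \eqref{eq:sum_b_i} also closes correctly, since the inductive step reduces to $5b_{k+1}-b_k = b_{k+2}-b_{k+1}-2$, which is precisely the recurrence $b_{k+2}=6b_{k+1}-b_k+2$. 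What the paper's route buys is brevity and a direct link to the literature; what yours buys is a uniform, verifiable derivation that does not depend on looking up Identities~(10.1)--(10.6) of Koshy. One small caution: the step for the squared-argument cases is slightly more than ``replacing $\gamma$ by $\gamma^2$,'' because the relevant simplification is no longer $\gamma-1=\sqrt{2}$ but $\gamma^2-1=2\gamma$ (from the characteristic equation $\gamma^2=2\gamma+1$), which yields $\sum_{i=1}^k \gamma^{2i} = (\gamma^{2k+1}-\gamma)/2$ with a denominator of $2$ rather than $\sqrt{2}$; you should state that identity explicitly when writing the proof out in full.
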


\begin{proof}[Proof of Identity~\eqref{eq:sum_P_i}] This is well known (see~\cite[Identity~(10.1)]{Koshy2014}).
\end{proof}

\begin{proof}[Proof of Identity~\eqref{eq:sum_Q_i}] This is well known (see~\cite[Identity~(10.2)]{Koshy2014}).
\end{proof}

\begin{proof}[Proof of Identity~\eqref{eq:sum_B_i}]
By the Binet formulas, we have
$B_i = \frac{\gamma^{2i} - \delta^{2i}}{4 \sqrt{2}} = \frac{1}{2} \cdot \frac{\gamma^{2i} - \delta^{2i}}{2 \sqrt{2}} = \frac{1}{2} P_{2i}$.
It follows that $\sum_{i=1}^k B_i = \frac{1}{2} \sum_{i=1}^k P_{2i} = \frac{1}{2} \left( \frac{1}{2} P_{2k+1} - \frac{1}{2} \right) = \frac{1}{4} ( P_{2k+1} - 1 )$,
where the second equality holds by a well-known identity for the sum of the first $k$ even-indexed Pell numbers (see~\cite[Identity~(10.4)]{Koshy2014}).
\end{proof}

\begin{proof}[Proof of Identity~\eqref{eq:sum_C_i}]
By the Binet formulas, we have $C_i = \frac{\gamma^{2i} - \delta^{2i}}{2} = Q_{2i}$.
It follows that $\sum_{i=1}^k C_i = \sum_{i=1}^k Q_{2i} = \frac{1}{2} ( Q_{2k+1} - 1 )$,
where the second equality holds by a well-known identity for the sum of the first $k$ even-indexed associated Pell numbers (see~\cite[Identity~(10.5)]{Koshy2014}).
\end{proof}

\begin{proof}[Proof of Identity~\eqref{eq:sum_b_i}]
    This is well known (see~\cite[Proposition~3.6]{Catarino2015}).
\end{proof}

\begin{proof}[Proof of Identity~\eqref{eq:sum_c_i}]
By the Binet formulas, we have $c_i = \frac{\gamma^{2i-1} - \delta^{2i-1}}{2} = Q_{2i-1}$.
It follows that $\sum_{i=1}^k c_i = \sum_{i=1}^k Q_{2i-1} = \frac{1}{2} ( Q_{2k} - 1 )$,
where the second equality holds by a well-known identity for the sum of the first $k$ odd-indexed associated Pell numbers (see~\cite[Identity~(10.6)]{Koshy2014}).
\end{proof}

Cassini's identity for the Fibonacci numbers has an analogue in both the Pell and associated Pell settings. We use the following Cassini's identities in the next two lemmas to prove the closed forms of $\curl{P}{}$ and $\curl{Q}{}$ given in Theorems~\ref{thm:curl_P1} and \ref{thm:curl_Q1}, respectively.

\begin{lemma}[Cassini's identity for $\PellSeq$]\label{lem:Cassini_identity_Pell_version}
For all $k \geq 1$, we have $P_{k-1} P_{k+1} = P_k^2 + (-1)^k$.
\end{lemma}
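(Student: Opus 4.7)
The plan is to prove this by induction on $k$, using the defining recurrence $P_{n+1} = 2P_n + P_{n-1}$.

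For the base case $k=1$, I would directly check that $P_0 P_2 = 0 \cdot 2 = 0$ and $P_1^2 + (-1)^1 = 1 - 1 = 0$, so the identity holds.

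For the inductive step, assuming $P_{k-1} P_{k+1} = P_k^2 + (-1)^k$, I want to show $P_k P_{k+2} = P_{k+1}^2 + (-1)^{k+1}$. The key manipulation is to use the recurrence $P_{k+2} = 2P_{k+1} + P_k$ to expand
\[
P_k P_{k+2} = 2 P_k P_{k+1} + P_k^2.
\]
Then I would invoke the inductive hypothesis in the form $P_k^2 = P_{k-1} P_{k+1} - (-1)^k$ to rewrite the right side as
\[
2 P_k P_{k+1} + P_{k-1} P_{k+1} - (-1)^k = P_{k+1}(2P_k + P_{k-1}) - (-1)^k = P_{k+1}^2 + (-1)^{k+1},
\]
where the last equality uses the recurrence $P_{k+1} = 2P_k + P_{k-1}$ and the fact that $-(-1)^k = (-1)^{k+1}$. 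This completes the induction.

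A second, non-inductive route would use the Binet formula $P_n = (\gamma^n - \delta^n)/(2\sqrt{2})$ together with the fact that $\gamma \delta = (1+\sqrt{2})(1-\sqrt{2}) = -1$. Expanding $P_{k-1}P_{k+1} - P_k^2$ causes the $\gamma^{2k}$ and $\delta^{2k}$ terms to cancel, leaving a multiple of $(\gamma\delta)^{k-1} = (-1)^{k-1}$ times $\gamma^2 + \delta^2 - 2\gamma\delta = (\gamma-\delta)^2 = 8$, which reduces cleanly to $(-1)^k$. There is no real obstacle in either approach; the induction is shortest and I would present that.
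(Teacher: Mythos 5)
Your induction is correct: the base case $P_0P_2 = 0 = P_1^2 - 1$ checks out, and the inductive step correctly uses the recurrence to write $P_kP_{k+2} = 2P_kP_{k+1} + P_k^2$, substitutes $P_k^2 = P_{k-1}P_{k+1} - (-1)^k$ from the hypothesis, and refactors via $2P_k + P_{k-1} = P_{k+1}$. The comparison point is that the paper does not prove this lemma at all; it simply cites Horadam's 1971 identity~(30). So your self-contained two-line induction is a genuinely different (and more elementary) route than the paper's citation, at the modest cost of a few extra lines. It is worth noting that your second suggested route via the Binet formula and $\gamma\delta = -1$ is exactly the method the paper \emph{does} use for the companion statement, Cassini's identity for $\QellSeq$ (Lemma~\ref{lem:Cassini_identity_Qell_version}), where the authors explain they could not locate a proof in the literature; your Binet sketch for the Pell case would go through verbatim with the sign bookkeeping you describe, using $(\gamma-\delta)^2 = 8$ against the $1/(2\sqrt{2})^2 = 1/8$ normalization. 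Either of your two arguments is acceptable; the induction is indeed the shorter one to write out in full.
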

\begin{proof}
See Horadam~\cite[Identity~(30)]{Horadam1971}.
\end{proof}

Koshy mentions Cassini's identity in the associated Pell setting~\cite[Identity~(35)]{Koshy2014}; however, he provides no proof. As we could not find a proof in the literature of this identity, we provide our own proof using the Binet formula for $Q_n$.

\begin{lemma}[Cassini's identity for $\QellSeq$]\label{lem:Cassini_identity_Qell_version}
For all $k \geq 1$, we have
$$Q_{k-1} Q_{k+1} = Q_k^2 + 2(-1)^{k-1}.$$
\end{lemma}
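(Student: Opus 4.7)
The plan is to use the Binet form $Q_n = \frac{\gamma^n + \delta^n}{2}$ with $\gamma = 1 + \sqrt{2}$ and $\delta = 1 - \sqrt{2}$, together with the elementary observations $\gamma \delta = -1$ (so that $(\gamma\delta)^{k-1} = (-1)^{k-1}$) and $\gamma - \delta = 2\sqrt{2}$ (so that $(\gamma - \delta)^2 = 8$). The whole proof reduces to a direct algebraic expansion.

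First I would expand $Q_{k-1} Q_{k+1}$ using Binet:
\[
Q_{k-1} Q_{k+1} = \frac{(\gamma^{k-1} + \delta^{k-1})(\gamma^{k+1} + \delta^{k+1})}{4} = \frac{\gamma^{2k} + \delta^{2k} + (\gamma\delta)^{k-1}(\gamma^2 + \delta^2)}{4},
\]
after collecting the two cross terms $\gamma^{k-1}\delta^{k+1} + \delta^{k-1}\gamma^{k+1}$ and pulling out $(\gamma\delta)^{k-1}$. Next I would expand
\[
Q_k^2 = \frac{(\gamma^k + \delta^k)^2}{4} = \frac{\gamma^{2k} + \delta^{2k} + 2(\gamma\delta)^k}{4}.
\]

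Subtracting these two expressions cancels the $\gamma^{2k} + \delta^{2k}$ contribution and leaves
\[
Q_{k-1} Q_{k+1} - Q_k^2 = \frac{(\gamma\delta)^{k-1}\bigl((\gamma^2 + \delta^2) - 2\gamma\delta\bigr)}{4} = \frac{(\gamma\delta)^{k-1} (\gamma - \delta)^2}{4}.
\]
Substituting $(\gamma - \delta)^2 = 8$ and $(\gamma\delta)^{k-1} = (-1)^{k-1}$ yields $2(-1)^{k-1}$, which is exactly the claimed identity.

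There is no real obstacle here: the only thing to keep an eye on is the bookkeeping of signs when factoring the cross terms as $(\gamma\delta)^{k-1}$ (rather than $(\gamma\delta)^{k+1}$ or $(\gamma\delta)^k$), and the correct simplification $\gamma^2 - 2\gamma\delta + \delta^2 = (\gamma - \delta)^2$. An alternative would be a short induction on $k$ using the recurrence $Q_{k+1} = 2Q_k + Q_{k-1}$, but the Binet computation is more direct and matches the tone of the companion proof of Lemma~\ref{lem:Cassini_identity_Pell_version}.
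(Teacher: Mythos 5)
Your proposal is correct and follows essentially the same route as the paper: expand $Q_{k-1}Q_{k+1}$ and $Q_k^2$ via the Binet form, cancel the $\gamma^{2k}+\delta^{2k}$ terms, and factor $(\gamma\delta)^{k-1}=(-1)^{k-1}$ out of what remains. The only cosmetic difference is that you package the final constant as $(\gamma-\delta)^2=8$, whereas the paper substitutes $\gamma^2+\delta^2=6$ and combines $6(-1)^{k-1}+2(-1)^{k-1}$; both give $2(-1)^{k-1}$.
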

\begin{proof}
By the Binet formula for the associated Pell sequence, we have
\begin{align*}
    Q_{k+1} Q_{k-1} - Q^2_k &=  \left ( \frac{ \gamma^{k+1} + \delta^{k+1} }{ 2 } \right ) \left ( \frac{ \gamma^{k-1} + \delta^{k-1} }{ 2 } \right ) - \left ( \frac{ \gamma^k + \delta^k }{ 2 } \right )^2 \\
    &= \frac{\gamma^{2k} + \gamma^{k+1} \delta^{k-1} + \gamma^{k-1} \delta^{k+1} + \delta^{2k} }{4} - \frac{\gamma^{2k} + 2(\gamma \delta )^k + \delta^{2k} }{4} \\
    &= \frac{\gamma^{2}(\gamma \delta)^{k-1} + \delta^2(\gamma \delta)^{k-1} - 2(\gamma \delta)^k}{ 4 } \\
    &= \frac{(\gamma^2 + \delta^2)(-1)^{k-1} - 2(-1)^{k}}{ 4 }  & \text{(since $\gamma \delta = -1$)}\\
    &= \frac{6(-1)^{k-1} + 2(-1)^{k-1}}{ 4 } \\
    &= 2(-1)^{k-1},
\end{align*}
where the fifth equality holds since $3 = Q_2 = \frac{\gamma^2 + \delta^2}{2}$ implies $\gamma^2 + \delta^2 = 6$.
\end{proof}

\begin{lemma}\label{lem:gcd_Pells_Qells}
For all $n \geq 1$, we have the following five identities:

\begin{minipage}[b][13ex][t]{0.4\textwidth}
\begin{align}
    \gcd( P_n, P_{n+1} ) &= 1, \label{eq:gcd_Pell_one_apart}\\
    \gcd( Q_n, Q_{n+1} ) &= 1, \label{eq:gcd_Qell_one_apart}\\
    \gcd( P_{2n}, P_{2n+2}) &= 2, \label{eq:gcd_P_even} 
\end{align}
\end{minipage}
\hspace{.4in}
\begin{minipage}[b][13ex][t]{0.4\textwidth}
\begin{align}
    \gcd( P_{2n-1}, P_{2n+1}) &= 1, \label{eq:gcd_P_odd}  \\
    \gcd( Q_n, Q_{n+2}) &= 1. \label{eq:gcd_Q} 
\end{align}
\end{minipage}
\end{lemma}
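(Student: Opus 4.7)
The plan is to prove all five identities together using the two-step recurrences $P_{n+2}=2P_{n+1}+P_n$ and $Q_{n+2}=2Q_{n+1}+Q_n$, together with a short parity observation. In fact Cassini will not be needed; the recurrences do everything.

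First I would establish the parity pattern of the two sequences. Since $P_{n}\equiv P_{n-2}\pmod{2}$ (from the recurrence) and $P_0=0$, $P_1=1$, induction on $n$ yields that $P_n$ is even iff $n$ is even. Similarly $Q_{n}\equiv Q_{n-2}\pmod{2}$, and the initial values $Q_0=Q_1=1$ are both odd, so every $Q_n$ is odd. This tiny lemma is the parity input needed below.

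Next, for identities \eqref{eq:gcd_Pell_one_apart} and \eqref{eq:gcd_Qell_one_apart}, I would observe that the recurrence gives
\[
\gcd(P_{n+1},P_n)=\gcd(2P_n+P_{n-1},P_n)=\gcd(P_{n-1},P_n),
\]
and identically for $Q$. Thus the GCD is constant in $n$, so induction down to the base cases $\gcd(P_1,P_0)=\gcd(1,0)=1$ and $\gcd(Q_1,Q_0)=\gcd(1,1)=1$ finishes both.

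For the three two-step identities, the key move is to apply the recurrence once to convert a distance-two GCD into one involving a distance-one GCD and an extra factor of $2$. For \eqref{eq:gcd_P_odd} and \eqref{eq:gcd_Q}, write
\[
\gcd(P_{2n+1},P_{2n-1})=\gcd(2P_{2n}+P_{2n-1},P_{2n-1})=\gcd(2P_{2n},P_{2n-1}),
\]
and analogously $\gcd(Q_{n+2},Q_n)=\gcd(2Q_{n+1},Q_n)$. Now $P_{2n-1}$ is odd and $Q_n$ is odd by the parity step, so the factor of $2$ can be dropped and the GCD collapses to $\gcd(P_{2n},P_{2n-1})=1$ and $\gcd(Q_{n+1},Q_n)=1$ by the already-proved \eqref{eq:gcd_Pell_one_apart} and \eqref{eq:gcd_Qell_one_apart}. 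Finally, for \eqref{eq:gcd_P_even}, the same manipulation gives $\gcd(P_{2n+2},P_{2n})=\gcd(2P_{2n+1},P_{2n})$. Here $P_{2n}$ is even and $P_{2n+1}$ is odd, so if I set $d=\gcd(2P_{2n+1},P_{2n})$ and write $d=2^a m$ with $m$ odd, then $m\mid\gcd(P_{2n+1},P_{2n})=1$ forces $m=1$, while $d\mid 2P_{2n+1}$ with $P_{2n+1}$ odd forces $a\le 1$; since both $2P_{2n+1}$ and $P_{2n}$ are even, we conclude $d=2$ exactly.

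There is no real obstacle here: the only mildly subtle point is the $2$-adic bookkeeping in \eqref{eq:gcd_P_even}, which is why I would isolate the parity statement as a preliminary observation before beginning the case analysis.
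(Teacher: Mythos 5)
Your proof is correct, and it takes a genuinely different route from the paper's. The paper derives \eqref{eq:gcd_Pell_one_apart} and \eqref{eq:gcd_Qell_one_apart} from the two Cassini identities (Lemmas~\ref{lem:Cassini_identity_Pell_version} and \ref{lem:Cassini_identity_Qell_version}): a common divisor of consecutive terms must divide $(-1)^k$ in the Pell case, or divide $2(-1)^{k-1}$ in the associated Pell case and then be odd since all $Q_n$ are odd. For the three distance-two identities \eqref{eq:gcd_P_even}, \eqref{eq:gcd_P_odd}, and \eqref{eq:gcd_Q}, the paper simply cites Proposition~2 of Fl\'orez, Higuita, and Mukherjee with $x:=1$. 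Your argument replaces all of this with a single self-contained mechanism: one application of the recurrence turns each GCD into a shifted one (the Euclidean algorithm), the parity facts ($P_n$ even iff $n$ even, $Q_n$ always odd) let you strip or retain the resulting factor of $2$, and the distance-two cases reduce to the distance-one cases already proved. Every step checks out, including the $2$-adic bookkeeping in \eqref{eq:gcd_P_even}: writing $d=\gcd(2P_{2n+1},P_{2n})=2^a m$ with $m$ odd, you correctly get $m\mid\gcd(P_{2n+1},P_{2n})=1$ and $a=1$ since $2\mid P_{2n}$ while $\nu_2(2P_{2n+1})=1$. What each approach buys: the paper's route is shorter on the page because the Cassini identities are needed elsewhere anyway (they drive the odd-$k$ cases of Theorems~\ref{thm:curl_P1} and \ref{thm:curl_Q1}), but it leaves the last three identities resting on an external reference; your route is fully elementary and self-contained, uses nothing beyond the recurrence and initial conditions, and would generalize immediately to other Lucas-type sequences where a clean Cassini identity or a convenient citation might not be at hand.
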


\begin{proof}
Identity~\eqref{eq:gcd_Pell_one_apart} follows from Lemma~\ref{lem:Cassini_identity_Pell_version}, while Identity~\eqref{eq:gcd_Qell_one_apart} follows from Lemma~\ref{lem:Cassini_identity_Qell_version} and the fact that associated Pell numbers are always odd. Identities~\eqref{eq:gcd_P_even} and \eqref{eq:gcd_P_odd} hold by Fl\'orez et al.~\cite{Florez2018} in Proposition~2 part (2) if we set $x:=1$, and Identity~\eqref{eq:gcd_Q} holds by Fl\'orez et al.~\cite{Florez2018} in Proposition~2 part (1) if we set $x:=1$.
\end{proof}

\subsection{New identities used to prove our main results}\label{subsec:our_new_identities}

\begin{lemma}\label{lem:Pell_s_plus_r}
For all $s,r \geq 1$ where $s$ is even, the following identity holds:
$$P_{s+r} - P_r = \begin{cases}
2P_{s/2} Q_{s/2+r}, &\text{if $s \equiv 0 \pmodd 4$}; \\
2Q_{s/2} P_{s/2+r}, &\text{if $s \equiv 2 \pmodd 4$}. 
\end{cases}$$
\end{lemma}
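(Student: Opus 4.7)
The plan is to imitate the proof of Ruggles's identity~\eqref{eq:Luca_known_result} by working directly with the Binet formulas
$P_n = (\gamma^n-\delta^n)/(2\sqrt{2})$ and $Q_n=(\gamma^n+\delta^n)/2$, where $\gamma=1+\sqrt{2}$, $\delta=1-\sqrt{2}$, and crucially $\gamma\delta=-1$. Since $s$ is even, the substitution $a:=s/2+r$ and $b:=s/2$ satisfies $s+r=a+b$ and $r=a-b$, so the claim is equivalent to a symmetric-index identity $P_{a+b}-P_{a-b}=2P_b Q_a$ or $2Q_b P_a$ depending on the parity of $b$.

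First I would expand
\begin{align*}
P_{a+b}-P_{a-b} \;=\; \frac{\gamma^{a+b}-\delta^{a+b}-\gamma^{a-b}+\delta^{a-b}}{2\sqrt{2}} \;=\; \frac{\gamma^{a-b}(\gamma^{2b}-1)-\delta^{a-b}(\delta^{2b}-1)}{2\sqrt{2}}.
\end{align*}
The key algebraic move is to use $\gamma^{-1}=-\delta$ and $\delta^{-1}=-\gamma$ (both immediate from $\gamma\delta=-1$) to factor
\begin{align*}
\gamma^{2b}-1 \;=\; \gamma^b\bigl(\gamma^b-(-\delta)^b\bigr), \qquad \delta^{2b}-1 \;=\; \delta^b\bigl(\delta^b-(-\gamma)^b\bigr).
\end{align*}
This is exactly where the parity of $b=s/2$ (equivalently, whether $s\equiv 0$ or $2\pmod 4$) enters.

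Then I would split into the two cases. When $b$ is even (i.e., $s\equiv 0\pmod 4$), the above factorizations become $\gamma^b(\gamma^b-\delta^b)=\gamma^b\cdot 2\sqrt{2}\,P_b$ and $\delta^b(\delta^b-\gamma^b)=-\delta^b\cdot 2\sqrt{2}\,P_b$; substituting back collapses the numerator to $2\sqrt{2}\,P_b\,(\gamma^a+\delta^a)$, which after dividing by $2\sqrt{2}$ yields $2P_b Q_a=2P_{s/2}Q_{s/2+r}$. When $b$ is odd (i.e., $s\equiv 2\pmod 4$), the factorizations instead produce $\gamma^b(\gamma^b+\delta^b)=\gamma^b\cdot 2Q_b$ and $\delta^b(\delta^b+\gamma^b)=\delta^b\cdot 2Q_b$; substituting and simplifying gives $2Q_b\cdot(\gamma^a-\delta^a)/(2\sqrt{2})=2Q_b P_a=2Q_{s/2}P_{s/2+r}$.

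The computation is essentially mechanical once the substitution $a=s/2+r$, $b=s/2$ is made, so there is no serious obstacle; the only thing to be careful about is the sign bookkeeping in the factorizations of $\gamma^{2b}-1$ and $\delta^{2b}-1$, which is precisely where the case distinction between $s\equiv 0$ and $s\equiv 2\pmod 4$ arises. I would also remark briefly that the companion identity for $Q$-values, stated next as Lemma~\ref{lem:Qell_s_plus_r}, admits the same proof strategy with $P_n$ replaced by $Q_n$ on the left-hand side.
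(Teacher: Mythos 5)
Your proposal is correct and takes essentially the same approach as the paper: a direct Binet-formula computation in which the case split comes from the sign of $(\gamma\delta)^{s/2}$, i.e., the parity of $s/2$. The substitution $a=s/2+r$, $b=s/2$ and the factorization of $\gamma^{2b}-1$ are only cosmetic repackagings of the paper's step of absorbing $(\gamma\delta)^{s/2}=\pm 1$ into the subtracted term and factoring the numerator as $\bigl(\gamma^{s/2}\mp\delta^{s/2}\bigr)\bigl(\gamma^{s/2+r}\pm\delta^{s/2+r}\bigr)$.
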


\begin{proof}
Let $s,r \geq 1$ be given where $s$ is even. 

\begin{itemize}
\item[] \textbf{Case I.} Suppose $s \equiv 0 \pmod 4$. Then $\frac{s}{2}$ is even and hence $(\gamma\delta)^{s/2} = 1$. Observe that
\begin{align*}
    P_{s+r} - P_r &=  \frac{\gamma^{s+r} - \delta^{s+r}}{2\sqrt{2}} - \frac{\gamma^{r} - \delta^{r}}{2\sqrt{2}}\\
        &= \frac{1}{2\sqrt{2}} \left( \gamma^{s+r} - \delta^{s+r} - (\gamma\delta)^{s/2} (\gamma^{r} - \delta^{r}) \right) &\text{(since $(\gamma\delta)^{s/2} = 1$)}\\
        &= \frac{1}{2\sqrt{2}} \left( \gamma^{s/2} - \delta^{s/2}  \right) \left( \gamma^{s/2 + r} + \delta^{s/2 + r}  \right)\\
        &= 2 \cdot \frac{\gamma^{s/2} - \delta^{s/2}}{2 \sqrt{2}} \cdot \frac{\gamma^{s/2 + r} + \delta^{s/2 + r}}{2}\\
        &= 2 P_{s/2} Q_{s/2+r}.
\end{align*}

\item[] \textbf{Case II.}  Suppose $s \equiv 2 \pmod 4$. Then $\frac{s}{2}$ is odd and hence $(\gamma\delta)^{s/2} = -1$. Observe that
\begin{align*}
    P_{s+r} - P_r &=  \frac{\gamma^{s+r} - \delta^{s+r}}{2\sqrt{2}} - \frac{\gamma^{r} - \delta^{r}}{2\sqrt{2}}\\
        &= \frac{1}{2\sqrt{2}} \left( \gamma^{s+r} - \delta^{s+r} + (\gamma\delta)^{s/2} (\gamma^{r} - \delta^{r}) \right) &\text{(since $(\gamma\delta)^{s/2} = -1$)}\\
        &= \frac{1}{2\sqrt{2}} \left( \gamma^{s/2} + \delta^{s/2}  \right) \left( \gamma^{s/2 + r} - \delta^{s/2 + r}  \right)\\
        &= 2 \cdot \frac{\gamma^{s/2} + \delta^{s/2}}{2} \cdot \frac{\gamma^{s/2 + r} - \delta^{s/2 + r}}{2\sqrt{2}}\\
        &= 2 Q_{s/2} P_{s/2+r}.
\end{align*}\qedhere
\end{itemize}
\end{proof}

\begin{lemma}\label{lem:Qell_s_plus_r}
For all $s,r \geq 1$ where $s$ is even, the following identity holds:
$$Q_{s+r} - Q_r = \begin{cases}
4P_{s/2} P_{s/2+r}, &\text{if $s \equiv 0 \pmodd 4$}; \\
2Q_{s/2} Q_{s/2+r}, &\text{if $s \equiv 2 \pmodd 4$}.
\end{cases}$$
\end{lemma}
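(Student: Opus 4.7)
The plan is to mirror the proof of Lemma~\ref{lem:Pell_s_plus_r} exactly, replacing the Binet formula for $P_n$ with that of $Q_n$, and splitting into the same two cases based on the residue of $s$ modulo $4$. The driving observation in each case is that $s$ even forces $(\gamma\delta)^{s/2} = \pm 1$, and this sign determines which of two candidate factorizations of $\gamma^{s+r} + \delta^{s+r} - (\gamma^r + \delta^r)$ collapses cleanly.

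For Case~I, $s \equiv 0 \pmod 4$, I would use that $s/2$ is even so $(\gamma\delta)^{s/2} = 1$. Substituting this factor of $1$ in front of $\gamma^r + \delta^r$ inside the subtracted term allows the factorization
$$\gamma^{s+r} + \delta^{s+r} - (\gamma^r + \delta^r) = (\gamma^{s/2} - \delta^{s/2})(\gamma^{s/2+r} - \delta^{s/2+r}),$$
which I would verify by direct expansion: the cross terms $\gamma^{s/2}\delta^{s/2+r} + \delta^{s/2}\gamma^{s/2+r}$ equal $(\gamma\delta)^{s/2}(\gamma^r + \delta^r) = \gamma^r + \delta^r$, matching the subtracted piece. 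Each factor then contributes $2\sqrt{2}\,P_\bullet$ via the Binet formula for $P$, and combining with the overall prefactor $\tfrac{1}{2}$ coming from $Q_{s+r} - Q_r = \tfrac{1}{2}(\gamma^{s+r}+\delta^{s+r}-\gamma^r-\delta^r)$ produces exactly $4\,P_{s/2}\,P_{s/2+r}$.

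For Case~II, $s \equiv 2 \pmod 4$, I would use that $s/2$ is odd so $(\gamma\delta)^{s/2} = -1$. The sign flip swaps which factorization works: now
$$\gamma^{s+r} + \delta^{s+r} - (\gamma^r + \delta^r) = (\gamma^{s/2} + \delta^{s/2})(\gamma^{s/2+r} + \delta^{s/2+r})$$
is the correct identity, since expanding the right-hand side yields cross terms $(\gamma\delta)^{s/2}(\gamma^r + \delta^r) = -(\gamma^r + \delta^r)$, which exactly cancel the $+\gamma^r + \delta^r$ that would otherwise appear. Each factor is $2Q_\bullet$, so after the prefactor $\tfrac{1}{2}$ I recover $2\,Q_{s/2}\,Q_{s/2+r}$.

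The only real difficulty is bookkeeping: picking the correct sum-versus-difference factorization in each case, and carefully tracking the $\sqrt{2}$ factors in the Binet denominator for $P_n$ so that the final constants land as $4$ (Case~I) and $2$ (Case~II). No new ideas beyond those used in Lemma~\ref{lem:Pell_s_plus_r} are required, and one could equally well have proven both lemmas simultaneously from a single identity for $\gamma^{s+r} \pm \delta^{s+r} - (\gamma^r \pm \delta^r)$.
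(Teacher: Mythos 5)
Your proposal is correct and follows essentially the same route as the paper's proof: both use the Binet form of $Q_n$, exploit $(\gamma\delta)^{s/2} = 1$ or $-1$ according to $s \bmod 4$ to choose the difference-times-difference or sum-times-sum factorization, and then convert the factors back to $2\sqrt{2}\,P_\bullet$ or $2Q_\bullet$ to land on the constants $4$ and $2$. Your verification of the cross terms and the bookkeeping of the $\sqrt{2}$ factors match the paper's computation step for step.
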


\begin{proof}
Let $s,r \geq 1$ be given where $s$ is even. 

\begin{itemize}
\item[] \textbf{Case I.} Suppose $s \equiv 0 \pmod 4$. Then $\frac{s}{2}$ is even and hence $(\gamma\delta)^{s/2} = 1$. Observe that
\begin{align*}
    Q_{s+r} - Q_r &=  \frac{\gamma^{s+r} + \delta^{s+r}}{2} - \frac{\gamma^{r} + \delta^{r}}{2}\\
        &= \frac{1}{2} \left( \gamma^{s+r} + \delta^{s+r} - (\gamma\delta)^{s/2} (\gamma^{r} + \delta^{r}) \right) &\text{(since $(\gamma\delta)^{s/2} = 1$)}\\
        &= \frac{1}{2} \left( \gamma^{s/2} - \delta^{s/2}  \right) \left( \gamma^{s/2 + r} - \delta^{s/2 + r}  \right)\\
        &= 4 \cdot \frac{\gamma^{s/2} - \delta^{s/2}}{2 \sqrt{2}} \cdot \frac{\gamma^{s/2 + r} - \delta^{s/2 + r}}{2 \sqrt{2}}\\
        &= 4P_{s/2} P_{s/2+r}.
\end{align*}

\item[] \textbf{Case II.} Suppose $s \equiv 2 \pmod 4$. Then $\frac{s}{2}$ is odd and hence $(\gamma\delta)^{s/2} = -1$. Observe that
\begin{align*}
    Q_{s+r} - Q_r &=  \frac{\gamma^{s+r} + \delta^{s+r}}{2} - \frac{\gamma^{r} + \delta^{r}}{2}\\
        &= \frac{1}{2} \left( \gamma^{s+r} + \delta^{s+r} + (\gamma\delta)^{s/2} (\gamma^{r} + \delta^{r}) \right) &\text{(since $(\gamma\delta)^{s/2} = -1$)}\\
        &= \frac{1}{2} \left( \gamma^{s/2} + \delta^{s/2}  \right) \left( \gamma^{s/2 + r} + \delta^{s/2 + r}  \right)\\
        &= 2 \cdot \frac{\gamma^{s/2} + \delta^{s/2}}{2} \cdot \frac{\gamma^{s/2 + r} + \delta^{s/2 + r}}{2}\\
        &= 2 Q_{s/2} Q_{s/2+r}.
\end{align*}\qedhere
\end{itemize}
\end{proof}

Using the latter Lemmas~\ref{lem:Pell_s_plus_r} and \ref{lem:Qell_s_plus_r}, we are now ready to prove our main sum identities in the following theorem, which we use to prove our main results in Section~\ref{sec:main_results}.

\begin{theorem}\label{thm:sigma_S_cute}
For all $k\geq 1$, set $\sigma_S (k,n) := \sum_{i=1}^{k} S_{n+i}$ where $(S_n)_{n\geq 0}$ is any sequence. Then the following identities hold for the five sequences Pell $(P_n)_{n\geq 0}$, associated Pell $(Q_n)_{n\geq 0}$, balancing $(B_n)_{n\geq 0}$, Lucas-balancing $(C_n)_{n\geq 0}$, and Lucas-cobalancing $(c_n)_{n\geq 0}$:
\begin{align}
\sigma_P (k,n) = \frac{1}{2} (Q_{n+k+1} - Q_{n+1}) &= \begin{cases}
    2 P_{k/2} P_{k/2+n+1}, &\text{if $k \equiv 0 \pmodd{4}$}; \\ 
    Q_{k/2} Q_{k/2+n+1}, &\text{if $k \equiv 2 \pmodd{4}$}.
\end{cases} \label{eq:sigma_P} \\
\nonumber\\
\sigma_Q (k,n) = P_{n+k+1} - P_{n+1} &= \begin{cases}
    2 P_{k/2} Q_{k/2+n+1}, &\text{if $k \equiv 0 \pmodd{4}$}; \\
    2 Q_{k/2} P_{k/2+n+1}, &\text{if $k \equiv 2 \pmodd{4}$}.
\end{cases} \label{eq:sigma_Q} \\
\nonumber\\
\sigma_B (k,n) = \frac{1}{4} \left (  P_{2k+2n+1} - P_{2n+1} \right ) &= \begin{cases}
     \frac{1}{2} P_{k} Q_{k+2n+1}, &\text{if $k$ is even}; \\
     \frac{1}{2} Q_{k} P_{k+2n+1}, &\text{if $k$ is odd}.
\end{cases} \label{eq:sigma_B}\\
\nonumber\\
\sigma_C (k,n) = \frac{1}{2} \left (  Q_{2k+2n+1} - Q_{2n+1} \right ) &= \begin{cases}
    2 P_{k} P_{k+2n+1}, &\text{if $k$ is even}; \\
    Q_{k} Q_{k+2n+1}, &\text{if $k$ is odd}.
     \end{cases} \label{eq:sigma_BIG_C}\\
\nonumber\\
\sigma_c (k,n) = \frac{1}{2} \left (  Q_{2k+2n} - Q_{2n} \right ) &= \begin{cases}
    2 P_{k} P_{k+2n}, &\text{if $k$ is even}; \\
    Q_{k} Q_{k+2n}, &\text{if $k$ is odd}.
     \end{cases} \label{eq:sigma_little_c}
\end{align}

\end{theorem}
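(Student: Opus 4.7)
The plan is to prove each of the five identities in two stages. First, derive the ``difference'' closed form in the middle of each identity via a telescoping argument based on Lemma~\ref{lem:sum_S_i}. Second, apply Lemma~\ref{lem:Pell_s_plus_r} or Lemma~\ref{lem:Qell_s_plus_r} to that difference to obtain the factored form on the right.

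For the first equalities, I would write
\[
\sigma_S(k,n) \;=\; \sum_{j=1}^{n+k} S_j \;-\; \sum_{j=1}^{n} S_j,
\]
so each case reduces to subtracting two instances of the relevant identity from Lemma~\ref{lem:sum_S_i}. For example, applying Identity~\eqref{eq:sum_P_i} twice yields $\sigma_P(k,n) = \frac{1}{2}(Q_{n+k+1} - 1) - \frac{1}{2}(Q_{n+1} - 1) = \frac{1}{2}(Q_{n+k+1} - Q_{n+1})$, with the $-1$'s canceling cleanly. The analogous cancellation occurs for $\sigma_Q$, $\sigma_B$, $\sigma_C$, and $\sigma_c$ via Identities~\eqref{eq:sum_Q_i}--\eqref{eq:sum_C_i} and~\eqref{eq:sum_c_i}, which establishes the middle expressions in~\eqref{eq:sigma_P}--\eqref{eq:sigma_little_c}.

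For the second equalities, I would apply Lemma~\ref{lem:Pell_s_plus_r} or Lemma~\ref{lem:Qell_s_plus_r} with the appropriate parameters: $s := k$ and $r := n+1$ for $\sigma_P$ and $\sigma_Q$, and $s := 2k$ with $r := 2n+1$ (or $r := 2n$ for $\sigma_c$) for $\sigma_B$, $\sigma_C$, and $\sigma_c$. For this latter trio, the hypothesis that $s$ is even is automatic since $s = 2k$, and the condition $s \equiv 0 \pmodd{4}$ becomes simply ``$k$ is even'' (and likewise $s \equiv 2 \pmodd{4}$ becomes ``$k$ is odd''), which explains the two-case split there versus the mod-$4$ split for $\sigma_P$ and $\sigma_Q$. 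The factors of $\frac{1}{2}$ and $\frac{1}{4}$ in the middle expressions cancel cleanly against the $2$'s and $4$'s produced by Lemmas~\ref{lem:Pell_s_plus_r} and~\ref{lem:Qell_s_plus_r}, yielding exactly the factored right-hand sides.

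The main obstacle I anticipate is the edge case $n = 0$ in~\eqref{eq:sigma_little_c}, where substitution into Lemma~\ref{lem:Qell_s_plus_r} forces $r = 0$, which is outside its stated range $r \geq 1$. I would handle this either by observing that the Binet-based proof of Lemma~\ref{lem:Qell_s_plus_r} uses only $\gamma^0 = \delta^0 = 1$ and carries through verbatim for $r = 0$, or by verifying directly from the Binet formulas in Section~\ref{sec:definitions} that $Q_{2k} - 1 = 4 P_k^2$ when $k$ is even and $Q_{2k} - 1 = 2 Q_k^2$ when $k$ is odd. With this wrinkle resolved, the remainder of the theorem is routine bookkeeping in parities and constants.
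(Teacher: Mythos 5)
Your proposal matches the paper's proof essentially verbatim: each $\sigma_S(k,n)$ is written as a difference of two partial sums via Lemma~\ref{lem:sum_S_i}, and then Lemma~\ref{lem:Pell_s_plus_r} or Lemma~\ref{lem:Qell_s_plus_r} is applied with exactly the substitutions you name ($s:=k$, $r:=n+1$ for the Pell and associated Pell cases; $s:=2k$, $r:=2n+1$ or $r:=2n$ for the others). Your observation about the $r=0$ edge case in~\eqref{eq:sigma_little_c} is a point of care the paper silently skips, and your proposed fix (the Binet computation in Lemma~\ref{lem:Qell_s_plus_r} is valid for $r=0$) is correct.
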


\begin{proof}[Proof of Identity~\eqref{eq:sigma_P}]
Let $k\geq 2$ be given where $k$ is even. Observe that 
\begin{align*}
    \sigma_P (k,n) := \sum\limits_{i=1}^{k} P_{n+i} &= \sum\limits_{i=1}^{k+n} P_i - \sum\limits_{i=1}^{n} P_i  \\
    &= \frac{1}{2} (Q_{k+n+1} - Q_{n+1}), 
\end{align*}
where the last equality holds by Identity~\eqref{eq:sum_P_i} of Lemma~\ref{lem:sum_S_i}. By Lemma~\ref{lem:Qell_s_plus_r}, if we set $s := k$ and $r := n+1$, then we have 
$$\sigma_P (k,n) = \begin{cases}
\frac{1}{2} \left ( 4 P_{k/2} P_{k/2+n+1} \right ) = 2 P_{k/2} P_{k/2+n+1}, &\text{if $k \equiv 0 \pmodd{4}$}; \\
\frac{1}{2} \left ( 2 Q_{k/2} Q_{k/2+n+1} \right ) = Q_{k/2} Q_{k/2+n+1}, &\text{if $k \equiv 2 \pmodd{4}$}.
\end{cases}
$$
\end{proof}

\begin{proof}[Proof of Identity~\eqref{eq:sigma_Q}]
Let $k\geq 2$ be given where $k$ is even. Observe that 
\begin{align*}
    \sigma_Q (k,n) := \sum\limits_{i=1}^{k} Q_{n+i} &= \sum\limits_{i=1}^{k+n} Q_i - \sum\limits_{i=1}^{n} Q_i  \\
    &= P_{k+n+1} - P_{n+1}, 
\end{align*}
where the last equality holds by Identity~\eqref{eq:sum_Q_i} of Lemma~\ref{lem:sum_S_i}. By Lemma~\ref{lem:Pell_s_plus_r}, if we set $s := k$ and $r := n+1$, then we have 
$$\sigma_Q (k,n) = \begin{cases}
2 P_{k/2} Q_{k/2+n+1}, &\text{if $k \equiv 0 \pmodd{4}$}; \\
2 Q_{k/2} P_{k/2+n+1}, &\text{if $k \equiv 2 \pmodd{4}$}.
\end{cases}
$$
\end{proof}

\begin{proof}[Proof of Identity~\eqref{eq:sigma_B}]
Let $k\geq 1$ be given. Observe that 
\begin{align*}
    \sigma_B (k,n) := \sum\limits_{i=1}^{k} B_{n+i} &= \sum\limits_{i=1}^{k+n} B_i - \sum\limits_{i=1}^{n} B_i  \\
    &= \frac{1}{4} \left( P_{2k+2n+1} - P_{2n+1} \right),
\end{align*}
where the last equality holds by Identity~\eqref{eq:sum_B_i} of Lemma~\ref{lem:sum_S_i}. By Lemma~\ref{lem:Pell_s_plus_r}, if we set $s := 2k$ and $r := 2n+1$, then we have 
$$\sigma_B (k,n) = \begin{cases}
\frac{1}{4} \left ( 2 P_{2k/2} Q_{2k/2+2n+1} \right ) = \frac{1}{2} P_{k} Q_{k+2n+1}, &\text{if $k$ is even}; \\
\frac{1}{4} \left ( 2 Q_{2k/2} P_{2k/2+2n+1} \right ) = \frac{1}{2} Q_{k} P_{k+2n+1}, &\text{if $k$ is odd},
\end{cases}
$$
since $s \equiv 0 \pmod{4}$ if and only if $k$ is even, and $s \equiv 2 \pmod{4}$ if and only if $k$ is odd.
\end{proof}

\begin{proof}[Proof of Identity~\eqref{eq:sigma_BIG_C}]
Let $k\geq 1$ be given. Observe that 
\begin{align*}
    \sigma_C (k,n) := \sum\limits_{i=1}^{k} C_{n+i} &= \sum\limits_{i=1}^{k+n} C_i - \sum\limits_{i=1}^{n} C_i  \\
    &= \frac{1}{2} \left (  Q_{2k+2n+1} - Q_{2n+1} \right ),
\end{align*}
where the last equality holds by Identity~\eqref{eq:sum_C_i} of Lemma~\ref{lem:sum_S_i}. By Lemma~\ref{lem:Qell_s_plus_r}, if we set $s := 2k$ and $r := 2n + 1$, then we have
$$\sigma_C (k,n) = \begin{cases}
    \frac{1}{2} \left ( 4 P_{2k/2} P_{2k/2+2n+1} \right ) = 2 P_{k} P_{k+2n+1}, &\text{if $k$ is even}; \\
    \frac{1}{2} \left ( 2 Q_{2k/2} Q_{2k/2+2n+1} \right ) = Q_{k} Q_{k+2n+1}, &\text{if $k$ is odd},
     \end{cases}$$
since $s \equiv 0 \pmod{4}$ if and only if $k$ is even, and $s \equiv 2 \pmod{4}$ if and only if $k$ is odd.
\end{proof}

\begin{proof}[Proof of Identity~\eqref{eq:sigma_little_c}]
Let $k\geq 1$ be given. Observe that 
\begin{align*}
    \sigma_c (k,n) := \sum\limits_{i=1}^{k} c_{n+i} &= \sum\limits_{i=1}^{k+n} c_i - \sum\limits_{i=1}^{n} c_i  \\
    &= \frac{1}{2} \left (  Q_{2k+2n} - Q_{2n} \right ),
\end{align*}
where the last equality holds by Identity~\eqref{eq:sum_c_i} of Lemma~\ref{lem:sum_S_i}. By Lemma~\ref{lem:Qell_s_plus_r}, if we set $s := 2k $ and $r := 2n $, then we have
$$\sigma_c (k,n) = \begin{cases}
    \frac{1}{2} \left ( 4 P_{2k/2} P_{2k/2+2n} \right ) = 2 P_{k} P_{k+2n}, &\text{if $k$ is even}; \\
    \frac{1}{2} \left ( 2 Q_{2k/2} Q_{2k/2+2n} \right ) = Q_{k} Q_{k+2n}, &\text{if $k$ is odd},
     \end{cases}$$
since $s \equiv 0 \pmod{4}$ if and only if $k$ is even, and $s \equiv 2 \pmod{4}$ if and only if $k$ is odd.
\end{proof}

\section{Main results for \texorpdfstring{$\curl{P}{}$, $\curl{Q}{}$, $\curl{B}{}$, $\curl{C}{}$, and $\curl{c}{}$}{five sequences}}\label{sec:main_results}

These are the braids for braid sequences $\bigl(\curl{P}{}\bigr)_{k \geq 1}$ in solid blue and $\bigl(\curl{Q}{}\bigr)_{k \geq 1}$ in dotted red. In Theorems~\ref{thm:curl_P1} and \ref{thm:curl_Q1}, respectively, we give the proofs of these two braids.
\begin{center}
\begin{tikzpicture}
\matrix (m) [matrix of math nodes,
             nodes in empty cells,
             nodes={minimum height=4ex,text depth=0.5ex},
             column sep={1.75cm,between origins},
             row sep={1.5cm,between origins}]
{
1 & 2Q_1 & 1 & 2P_2 & 1 & 2Q_3 & 1 & 2P_4 & \cdots\\
1 & Q_1 & 1 & 2P_2 & 1 & Q_3 & 1 & 2P_4 & \cdots\\
};
\draw[blue, very thick,->]	(m-1-1) -- node[pos=.55, scale=.7, fill=white, circle] {} (m-2-2);
\draw[red, very thick, dotted, ->]	(m-2-1) -- (m-1-2);
\draw[red,very thick, dotted, ->]	(m-1-2) -- node[pos=.4, scale=.7, fill=white, circle] {} (m-2-3);
\draw[blue, very thick,->]	(m-2-2) --  (m-1-3);

\draw[blue, very thick,->]	(m-1-3) -- node[pos=.57, scale=.7, fill=white, circle] {} (m-2-4);
\draw[red, very thick, dotted, ->]	(m-2-3) -- (m-1-4);
\draw[red, very thick, dotted, ->]	(m-1-4) -- node[pos=.4, scale=.7, fill=white, circle] {} (m-2-5);
\draw[blue,very thick,->]	(m-2-4) -- (m-1-5);

\draw[blue, very thick,->]	(m-1-5) -- node[pos=.55, scale=.7, fill=white, circle] {} (m-2-6);
\draw[red, very thick, dotted, ->]	(m-2-5) -- (m-1-6);
\draw[red,very thick, dotted, ->]	(m-1-6) -- node[pos=.4, scale=.7, fill=white, circle] {} (m-2-7);
\draw[blue, very thick,->]	(m-2-6) -- (m-1-7);

\draw[blue, very thick,->]	(m-1-7) -- node[pos=.57, scale=.7, fill=white, circle] {} (m-2-8);
\draw[red, very thick, dotted, ->]	(m-2-7) -- (m-1-8);
\draw[red,very thick, dotted, ->]	(m-1-8) -- node[pos=.475, scale=.7, fill=white, circle] {} (m-2-9);
\draw[blue, very thick,->]	(m-2-8) -- (m-1-9);
\end{tikzpicture}
\end{center}

\begin{table}[H]
\renewcommand{\arraystretch}{1.4}
\centering
\begin{tabular}{|c||c|c|c|c|c|c|c|c|c|c|c|c|c|c|c|}
\hline
$k$  & \bf{1} & \bf{2} & \bf{3} & \bf{4} & \bf{5} & \bf{6} & \bf{7} & \bf{8} & \bf{9} & \bf{10} & \bf{11} & \bf{12} & \bf{13} & \bf{14}\\ \hline\hline
\rowcolor{lightgray}
\blue{$\curl{P}{}$}  & \blue{1} & \blue{1} & \blue{1} & \blue{4} & \blue{1} & \blue{7} & \blue{1} & \blue{24} & \blue{1} & \blue{41} & \blue{1} & \blue{140} & \blue{1} &  \blue{239}\\ \hline
\red{$\curl{Q}{}$}  & \red{1} & \red{2} & \red{1} & \red{4} & \red{1} & \red{14} & \red{1} & \red{24} & \red{1} & \red{82} & \red{1} & \red{140} & \red{1} & \red{478}\\ \hline
\end{tabular}
\caption{The first 14 terms of the sequences $\bigl(\curl{P}{}\bigr)_{k \geq 1}$ and $\bigl(\curl{Q}{}\bigr)_{k \geq 1}$.}
\label{table:CurlPell_CurlQell_numbers}
\end{table}

\begin{theorem}\label{thm:curl_P1}
For all $k \geq 1$,  the GCD of all sums of $k$ consecutive Pell numbers is
\begin{align*}
    \curl{P}{} = \begin{cases}
  2 P_{k/2},  & \text{if $k \equiv 0 \pmodd{4}$}; \\
  Q_{k/2},  & \text{if $k \equiv 2 \pmodd{4}$}; \\
  1,  & \text{if $k \equiv 1,3 \pmodd{4}$}.
\end{cases} 
\end{align*}
\end{theorem}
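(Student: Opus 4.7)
My plan is to split into three cases based on $k \pmod 4$, handle the two even cases by directly invoking the factorization from Theorem~\ref{thm:sigma_S_cute}, and then handle the odd case by a Euclidean-style reduction.

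For $k \equiv 0 \pmod 4$, Identity~\eqref{eq:sigma_P} factors $\sigma_P(k, n) = 2P_{k/2}\, P_{k/2+n+1}$, so the constant factor $2P_{k/2}$ pulls out of the gcd over $n \geq 0$, and Identity~\eqref{eq:gcd_Pell_one_apart} (consecutive Pell numbers are coprime) forces the remaining factor $\gcd_{n \geq 0} P_{k/2 + n + 1}$ to be $1$, giving $\curl{P}{} = 2P_{k/2}$. For $k \equiv 2 \pmod 4$ the argument is verbatim with the factorization $\sigma_P(k, n) = Q_{k/2}\, Q_{k/2+n+1}$ and Identity~\eqref{eq:gcd_Qell_one_apart}, giving $\curl{P}{} = Q_{k/2}$.

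The main obstacle is the odd case, where Theorem~\ref{thm:sigma_S_cute} provides no useful factorization. Since $(\sigma_P(k, n))_{n \geq 0}$ inherits the Pell recurrence in $n$ (being a termwise sum of Pell numbers), every term is an integer combination of the first two, so $\curl{P}{} = \gcd(\sigma_P(k, 0), \sigma_P(k, 1))$ and it suffices to show this equals $1$. Identity~\eqref{eq:sum_P_i} gives $\sigma_P(k, 0) = \tfrac{1}{2}(Q_{k+1} - 1)$, and telescoping gives $\sigma_P(k, 1) - \sigma_P(k, 0) = P_{k+1} - P_1 = P_{k+1} - 1$, so the task reduces to showing
\[
\gcd\!\left(\tfrac{1}{2}(Q_{k+1} - 1),\, P_{k+1} - 1\right) = 1.
\]

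To finish, set $m = (k+1)/2$ (an integer since $k$ is odd) and invoke the doubling identities $P_{2m} = 2P_m Q_m$ together with
\[
Q_{2m} - 1 \;=\; \begin{cases} 4 P_m^2, & m \text{ even}; \\ 2 Q_m^2, & m \text{ odd}, \end{cases}
\]
both of which drop out of the Binet forms of Section~\ref{sec:definitions} and are in fact the $r=0$ specializations of Lemmas~\ref{lem:Pell_s_plus_r} and~\ref{lem:Qell_s_plus_r}. Substituting yields $P_{k+1} - 1 = 2P_m Q_m - 1$ alongside $\sigma_P(k, 0) = 2 P_m^2$ when $k \equiv 3 \pmod 4$, or $\sigma_P(k, 0) = Q_m^2$ when $k \equiv 1 \pmod 4$. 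In either subcase, any prime dividing $\sigma_P(k, 0)$ divides $P_m$ (respectively $Q_m$), hence divides $2 P_m Q_m$, and therefore cannot divide $2 P_m Q_m - 1$. This forces the gcd to be $1$ and completes the odd case. The key insight making the odd case go through is that one Euclidean step exposes the expression $P_{k+1} - 1$, which is precisely ``a product of $P_m$ and $Q_m$ factors, minus $1$'' and hence automatically coprime to any Pell or associated Pell factor lurking inside $\sigma_P(k, 0)$.
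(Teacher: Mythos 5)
Your proof is correct, and while the two even cases coincide exactly with the paper's argument (factor via Identity~\eqref{eq:sigma_P}, pull out the constant, kill the remaining gcd with Identities~\eqref{eq:gcd_Pell_one_apart} and \eqref{eq:gcd_Qell_one_apart}), your treatment of odd $k$ is a genuinely different route. The paper argues by contradiction: assuming $p^j$ divides $\curl{P}{}$, it uses the two consecutive differences $P_{k+1}-1$ and $P_{k+2}-2$ to deduce $p^j \mid P_k$, then invokes Cassini's identity $P_k P_{k+2} = P_{k+1}^2 + 1$ (Lemma~\ref{lem:Cassini_identity_Pell_version}) to force $p^j \mid 2$, and finally rules out $p=2$ by a parity check on $P_{k+1}-1$. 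You instead observe that $\sigma_P(k,n)$ satisfies the Pell recurrence in $n$, so the full gcd collapses to $\gcd\bigl(\sigma_P(k,0),\,\sigma_P(k,1)\bigr)=\gcd\bigl(\tfrac{1}{2}(Q_{k+1}-1),\,P_{k+1}-1\bigr)$, and then with $m=(k+1)/2$ you write $\sigma_P(k,0)$ as $2P_m^2$ or $Q_m^2$ and $P_{k+1}-1$ as $2P_mQ_m-1$, from which coprimality is immediate (any prime in the first quantity divides $2P_mQ_m$, hence misses $2P_mQ_m-1$). This avoids Cassini's identity entirely and unifies the odd case with the same Luca-style difference factorizations that drive the even cases; the only housekeeping it requires is the $r=0$ instance of Lemmas~\ref{lem:Pell_s_plus_r} and~\ref{lem:Qell_s_plus_r}, which is not literally covered by their statements ($r\geq 1$) but follows from the identical Binet computation, as you correctly flag. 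Both proofs are complete; yours is arguably the more systematic of the two.
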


\begin{proof}\label{prop:curl_P1}
Let $k\geq 1$ be given. Recall $\sigma_P (k,n) = \sum\limits_{i=1}^{k} P_{n+i}$. So by definition of $\curl{P}{}$, we have 
\begin{equation}
    \curl{P}{} = \gcd\left( \sigma_P(k,0), \sigma_P(k,1), \sigma_P(k,2), \ldots  \right). \label{eq: curl P1}
\end{equation}

\begin{itemize}
\item[] \textbf{Case I.} Suppose $k \equiv 0 \pmod 4$. By Identity~\eqref{eq:sigma_P} of Theorem~\ref{thm:sigma_S_cute}, it follows that 
\begin{align*}
    \curl{P}{} &= \gcd( 2P_{k/2} P_{k/2+1}, 2P_{k/2} P_{k/2+2}, 2P_{k/2} P_{k/2+3}, \ldots ) \\
    &= 2P_{k/2} \cdot \gcd( P_{k/2+1}, P_{k/2+2}, P_{k/2+3}, \ldots ) \\
    &= 2P_{k/2},
\end{align*}
where the last equality holds by Identity~\eqref{eq:gcd_Pell_one_apart} of Lemma~\ref{lem:gcd_Pells_Qells}.

\item[] \textbf{Case II.} Suppose $k \equiv 2 \pmod 4$. By Identity~\eqref{eq:sigma_P} of Theorem~\ref{thm:sigma_S_cute}, it follows that 
\begin{align*}
    \curl{P}{} &= \gcd( Q_{k/2} Q_{k/2+1}, Q_{k/2} Q_{k/2+2}, Q_{k/2} Q_{k/2+3}, \ldots  ) \\
    &= Q_{k/2} \cdot \gcd( Q_{k/2+1}, Q_{k/2+2}, Q_{k/2+3}, \ldots ) \\
    &= Q_{k/2}, 
\end{align*}
where the last equality holds by Identity~\eqref{eq:gcd_Qell_one_apart} of Lemma~\ref{lem:gcd_Pells_Qells}.

\item[] \textbf{Case III.} Suppose $k$ is odd. Assume by way of contradiction that $p^j$ divides $\curl{P}{}$ for some prime $p$ with $j \geq 1$. By Equation~\eqref{eq: curl P1}, it follows that $p^j$ divides $\sigma_P(k,n)$ for all $n \geq 0$. Hence $p^j$ divides both $\sigma_P(k,1) - \sigma_P(k,0)$ and $\sigma_P(k,2) - \sigma_P(k,1)$. Observe that
\begin{align*}
    \sigma_P(k,1) - \sigma_P(k,0) &= (P_2 + P_3 + \cdots + P_{k+1}) - (P_1 + P_2 + \cdots + P_{k}) = P_{k+1} -  1, \text{and}\\
    \sigma_P(k,2) - \sigma_P(k,1) &=  (P_3 + P_4 + \cdots + P_{k+2}) - (P_2 + P_3 + \cdots + P_{k+1}) = P_{k+2} - 2.
\end{align*}
Hence $p^j$ divides both $P_{k+1} - 1$ and $P_{k+2} - 2$. Since $P_k + 2P_{k+1} = P_{k+2}$, we have
\begin{align*}
    P_k &= P_{k+2} - 2 P_{k+1}\\
        &= P_{k+2} - 2 - 2 P_{k+1} + 2\\
        &= (P_{k+2} - 2) - 2(P_{k+1} - 1),
\end{align*}
and therefore $p^j$ divides $P_k$. Thus $P_k P_{k+2} \equiv 0 \pmod{p^j}$. Moreover, since $p^j$ divides $P_{k+1} - 1$, we have $p^j$ divides $P_{k+1}^2 - 1$, and thus $P_{k+1}^2 + 1 \equiv 2 \pmod{p^j}$. By the Pell Cassini Identity, Lemma~\ref{lem:Cassini_identity_Pell_version}, we have $P_k P_{k+2} = P_{k+1}^2 + 1$ since $k$ is odd. It follows that $2 \equiv 0 \pmod{p^j}$ and thus $p^j$ divides $2$, forcing $p=2$ and $j = 1$. Since $p^j$ divides $P_{k+1} - 1$ and we know $p^j = 2$, this implies $P_{k+1} - 1$ is even. However, $k$ being odd implies $P_{k+1}$ is even and hence $P_{k+1} - 1$ is also odd, which yields a contradiction. Thus there exists no prime that divides $\curl{P}{}$ when $k$ is odd, and hence $\curl{P}{} = 1$ for all odd $k$.\qedhere
\end{itemize}
\end{proof}

\begin{theorem}\label{thm:curl_Q1}
For all $k \geq 1$,  the GCD of all sums of $k$ consecutive associated Pell numbers is
\begin{align*}
    \curl{Q}{} = \begin{cases}
  2 P_{k/2},  & \text{if $k \equiv 0 \pmodd{4}$}; \\
  2Q_{k/2},  & \text{if $k \equiv 2 \pmodd{4}$};\\
  1,  & \text{if $k \equiv 1,3 \pmodd{4}$}.
\end{cases}
\end{align*}
\end{theorem}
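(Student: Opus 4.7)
The plan is to mirror the strategy used to prove Theorem~\ref{thm:curl_P1}, taking advantage of the companion identity~\eqref{eq:sigma_Q} in place of~\eqref{eq:sigma_P}. Writing
$$\curl{Q}{} = \gcd\bigl(\sigma_Q(k,0),\, \sigma_Q(k,1),\, \sigma_Q(k,2),\, \ldots\bigr),$$
the two even cases are almost immediate. When $k \equiv 0 \pmod{4}$, Identity~\eqref{eq:sigma_Q} factors out $2 P_{k/2}$, leaving $\gcd(Q_{k/2+1}, Q_{k/2+2}, \ldots) = 1$ by Identity~\eqref{eq:gcd_Qell_one_apart} of Lemma~\ref{lem:gcd_Pells_Qells}. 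When $k \equiv 2 \pmod{4}$, it factors out $2 Q_{k/2}$, leaving $\gcd(P_{k/2+1}, P_{k/2+2}, \ldots) = 1$ by Identity~\eqref{eq:gcd_Pell_one_apart} of the same lemma.

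The odd case requires a Cassini-style contradiction argument analogous to Case~III of Theorem~\ref{thm:curl_P1}. Suppose a prime power $p^j$ divides $\curl{Q}{}$. Then $p^j$ divides the two consecutive differences
\begin{align*}
\sigma_Q(k,1) - \sigma_Q(k,0) &= Q_{k+1} - Q_1 = Q_{k+1} - 1,\\
\sigma_Q(k,2) - \sigma_Q(k,1) &= Q_{k+2} - Q_2 = Q_{k+2} - 3.
\end{align*}
Using the recurrence $Q_{k+2} = 2 Q_{k+1} + Q_k$, one obtains $Q_k - 1 = (Q_{k+2} - 3) - 2(Q_{k+1} - 1)$, so $p^j$ divides $Q_k - 1$. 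Consequently $Q_k Q_{k+2} \equiv 3$ and $Q_{k+1}^2 \equiv 1 \pmod{p^j}$. Cassini's identity from Lemma~\ref{lem:Cassini_identity_Qell_version} applied at index $k+1$ yields $Q_k Q_{k+2} = Q_{k+1}^2 + 2(-1)^k$, and since $k$ is odd this becomes $Q_k Q_{k+2} = Q_{k+1}^2 - 2$. Hence $3 \equiv -1 \pmod{p^j}$, forcing $p^j \mid 4$ and in particular $p = 2$.

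The main obstacle, and where this case diverges from the Pell case, is ruling out $p = 2$. Unlike $P_{k+1}$, the associated Pell number $Q_{k+1}$ is always odd, so $Q_{k+1} - 1$ being even is not contradictory on its own. Instead, I would use Identity~\eqref{eq:sum_Q_i} of Lemma~\ref{lem:sum_S_i} to observe that $\sigma_Q(k,0) = P_{k+1} - 1$. Since $k$ odd implies $k+1$ is even and every even-indexed Pell number is even, $P_{k+1} - 1$ is odd. Thus $\curl{Q}{}$ divides an odd integer, contradicting $p = 2$. Hence no prime divides $\curl{Q}{}$, completing the proof that $\curl{Q}{} = 1$ for odd $k$.
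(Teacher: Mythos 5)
Your proof is correct, and Cases I and II coincide with the paper's argument. In Case III you also track the paper's reasoning up through the congruence $3 \equiv -1 \pmod{p^j}$ forcing $p=2$, but you diverge in how $p=2$ is then excluded --- and your route is the sound one. The paper attempts to finish by claiming that $Q_{k+2}$ being odd makes $Q_{k+2}-3$ odd, hence not divisible by $2$; this is a slip, since the difference of two odd numbers is even, so no contradiction actually arises from that line. Your observation that $\curl{Q}{}$ divides $\sigma_Q(k,0) = P_{k+1}-1$, which is odd because $P_{k+1}$ is an even-indexed (hence even) Pell number when $k$ is odd, cleanly rules out $p=2$ and in fact repairs the gap in the published argument. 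You correctly anticipated that, unlike in the Pell case of Theorem~\ref{thm:curl_P1}, the parity of the associated Pell numbers alone cannot close the odd case, and the appeal to Identity~\eqref{eq:sum_Q_i} is exactly the right supplementary ingredient.
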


\begin{proof}\label{prop:curl_Q1}
Let $k\geq 1$ be given. Recall $\sigma_Q (k,n) = \sum\limits_{i=1}^{k} Q_{n+i}$. So by definition of $\curl{Q}{}$, we have 
\begin{equation}
    \curl{Q}{} = \gcd\left( \sigma_Q(k,0), \sigma_Q(k,1), \sigma_Q(k,2), \ldots  \right). \label{eq: curl Q1}
\end{equation}

\begin{itemize}
\item[] \textbf{Case I.} Suppose $k\equiv 0 \pmod 4$. By Identity~\eqref{eq:sigma_Q} of Theorem~\ref{thm:sigma_S_cute}, it follows that 
\begin{align*}
    \curl{Q}{} &= \gcd( 2P_{k/2} Q_{k/2+1}, 2P_{k/2} Q_{k/2+2}, 2P_{k/2} Q_{k/2+3}, \ldots ) \\
    &= 2P_{k/2} \cdot \gcd( Q_{k/2+1}, Q_{k/2+2}, Q_{k/2+3}, \ldots ) \\
    &= 2P_{k/2}, 
\end{align*}
where the last equality holds by Identity~\eqref{eq:gcd_Qell_one_apart} of Lemma~\ref{lem:gcd_Pells_Qells}.

\item[] \textbf{Case II.} Suppose $k\equiv 2 \pmod 4$. By Identity~\eqref{eq:sigma_Q} of Theorem~\ref{thm:sigma_S_cute}, it follows that 
\begin{align*}
    \curl{Q}{} &= \gcd( 2Q_{k/2} P_{k/2+1}, 2Q_{k/2} P_{k/2+2}, 2Q_{k/2} P_{k/2+3},\ldots  ) \\
    &= 2Q_{k/2} \cdot  \gcd( P_{k/2+1},  P_{k/2+2},  P_{k/2+3},\ldots ) \\
    &= 2Q_{k/2},
\end{align*}
where the last equality holds by Identity~\eqref{eq:gcd_Pell_one_apart} of Lemma~\ref{lem:gcd_Pells_Qells}.

\item[] \textbf{Case III.} Suppose $k$ is odd. Assume by way of contradiction that $p^j$ divides $\curl{Q}{}$ for some prime $p$ with $j \geq 1$. By Equation~\eqref{eq: curl Q1}, it follows that $p^j$ divides $\sigma_Q(k,n)$ for all $n \geq 0$. Hence $p^j$ divides both $\sigma_Q(k,1) - \sigma_Q(k,0)$ and $\sigma_Q(k,2) - \sigma_Q(k,1)$. Observe that
\begin{align*}
    \sigma_Q(k,1) - \sigma_Q(k,0) &= (Q_2 + Q_3 + \cdots + Q_{k+1}) - (Q_1 + Q_2 + \cdots + Q_{k}) = Q_{k+1} -  1, \text{and}\\
    \sigma_Q(k,2) - \sigma_Q(k,1) &=  (Q_3 + Q_4 + \cdots + Q_{k+2}) - (Q_2 + Q_3 + \cdots + Q_{k+1}) = Q_{k+2} - 3.
\end{align*}
Hence $p^j$ divides both $Q_{k+1} - 1$ and $Q_{k+2} - 3$. Since $Q_k + 2Q_{k+1} = Q_{k+2}$, we have 
\begin{align*}
    Q_k &= Q_{k+2} - 2Q_{k+1} \\
    &= Q_{k+2} - 3 - 2Q_{k+1} + 3 \\
    &= (Q_{k+2} - 3) - 2(Q_{k+1} - 1) + 1,
\end{align*}
and therefore $p^j$ divides $Q_k - 1$, and thus $Q_k \equiv 1 \pmod{p^j}$. Also, since $p^j$ divides $Q_{k+2} - 3$, we have $Q_{k+2} \equiv 3 \pmod{p^j}$. Therefore, $Q_k Q_{k+2} \equiv 3 \pmod{p^j}$. Moreover, since $p^j$ divides $Q_{k+1} - 1$, we have $p^j$ divides $Q_{k+1}^2 - 1$, and thus $Q_{k+1}^2 - 2 \equiv -1 \pmod{p^j}$. By the associated Pell Cassini Identity, Lemma~\ref{lem:Cassini_identity_Qell_version}, we have $Q_k Q_{k+2} = Q_{k+1}^2 - 2$ since $k$ is odd. It follows that $3 \equiv -1 \pmod{p^j}$, and hence $p^j$ divides 4, so $p=2$ is forced. Since $p^j$ divides $Q_{k+2} - 3$ and we know $p = 2$, this implies $Q_{k+2} - 3$ is even. However, for any $k$ we have $Q_{k+2}$ being odd and hence $Q_{k+2} - 3$ is also odd, which yields a contradiction. Thus there exists no prime that divides $\curl{Q}{}$ when $k$ is odd, and hence $\curl{Q}{} = 1$ for all odd $k$.\qedhere
\end{itemize}
\end{proof}

These are the braids for braid sequences $\bigl(\curl{B}{}\bigr)_{k \geq 1}$ in solid blue and $\bigl(\curl{C}{}\bigr)_{k \geq 1}$ in dotted red. In Theorems~\ref{thm:curl_B1} and \ref{thm:curl_C1}, respectively, we give the proofs of these two braids.
\begin{center}
\begin{tikzpicture}
\matrix (m) [matrix of math nodes,
             nodes in empty cells,
             nodes={minimum height=4ex,text depth=0.5ex},
             column sep={1.85cm,between origins},
             row sep={1.5cm,between origins}]
{
Q_1 & 2P_2 & Q_3 & 2P_4 & Q_5 & 2P_6 & Q_7 & 2P_8 & \cdots\\
Q_1 & \frac{1}{2}P_2 & Q_3 & \frac{1}{2}P_4 & Q_5 & \frac{1}{2}P_6 & Q_7 & \frac{1}{2}P_8 & \cdots\\
};
\draw[blue, very thick,->]	(m-1-1) -- node[pos=.55, scale=.7, fill=white, circle] {} (m-2-2);
\draw[red, very thick, dotted, ->]	(m-2-1) -- (m-1-2);
\draw[red,very thick, dotted, ->]	(m-1-2) -- node[scale=.7, fill=white, circle] {} (m-2-3);
\draw[blue, very thick,->]	(m-2-2) --  (m-1-3);

\draw[blue, very thick,->]	(m-1-3) -- node[pos=.55, scale=.7, fill=white, circle] {} (m-2-4);
\draw[red, very thick, dotted, ->]	(m-2-3) -- (m-1-4);
\draw[red, very thick, dotted, ->]	(m-1-4) -- node[scale=.7, fill=white, circle] {} (m-2-5);
\draw[blue,very thick,->]	(m-2-4) -- (m-1-5);

\draw[blue, very thick,->]	(m-1-5) -- node[pos=.55, scale=.7, fill=white, circle] {} (m-2-6);
\draw[red, very thick, dotted, ->]	(m-2-5) -- (m-1-6);
\draw[red,very thick, dotted, ->]	(m-1-6) -- node[scale=.7, fill=white, circle] {} (m-2-7);
\draw[blue, very thick,->]	(m-2-6) -- (m-1-7);

\draw[blue, very thick,->]	(m-1-7) -- node[pos=.55, scale=.7, fill=white, circle] {} (m-2-8);
\draw[red, very thick, dotted, ->]	(m-2-7) -- (m-1-8);
\draw[red,very thick, dotted, ->]	(m-1-8) -- node[scale=.7, fill=white, circle] {} (m-2-9);
\draw[blue, very thick,->]	(m-2-8) -- (m-1-9);
\end{tikzpicture}
\end{center}

\begin{table}[H]
\renewcommand{\arraystretch}{1.4}
\centering
\begin{tabular}{|c||c|c|c|c|c|c|c|c|c|c|c|c|c|c|c|}
\hline
$k$  & \bf{1} & \bf{2} & \bf{3} & \bf{4} & \bf{5} & \bf{6} & \bf{7} & \bf{8} & \bf{9} & \bf{10} & \bf{11} & \bf{12} & \bf{13} \\ \hline\hline
\rowcolor{lightgray}
\blue{$\curl{B}{}$}  & \blue{1} & \blue{1} & \blue{7} & \blue{6} & \blue{41} & \blue{35} & \blue{239} & \blue{204} & \blue{1393} & \blue{1189} & \blue{8119} & \blue{6930} & \blue{47321} \\ \hline
\red{$\curl{C}{}$}  & \red{1} & \red{4} & \red{7} & \red{24} & \red{41} & \red{140} & \red{239} & \red{816} & \red{1393} & \red{4756} & \red{8119} & \red{27720} & \red{47321} \\ \hline
\end{tabular}
\caption{The first 13 terms of the sequences $\bigl(\curl{B}{}\bigr)_{k \geq 1}$ and $\bigl(\curl{C}{}\bigr)_{k \geq 1}$.}
\label{table:CurlB_Curlb_numbers}
\end{table}

\begin{theorem}\label{thm:curl_B1}
For all $k \geq 1$,  the GCD of all sums of $k$ consecutive balancing numbers is
\begin{align*}
    \curl{B}{} = \begin{cases}
  \frac{1}{2}P_k,  & \text{if $k$ is even}; \\
   Q_k,  & \text{if $k$ is odd}.
\end{cases} 
\end{align*}
\end{theorem}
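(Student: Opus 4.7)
The plan is to mimic the Case I / Case II arguments used for $\curl{P}{}$ and $\curl{Q}{}$ in Theorems~\ref{thm:curl_P1} and \ref{thm:curl_Q1}, taking advantage of the fact that Identity~\eqref{eq:sigma_B} already gives a fully factored closed form for $\sigma_B(k,n)$. In particular, note that there is no third case to rule out here: both parities of $k$ produce a nontrivial factorization, so no Cassini-based contradiction argument is needed.

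First I would write
\[
\curl{B}{} \;=\; \gcd\bigl(\sigma_B(k,0),\,\sigma_B(k,1),\,\sigma_B(k,2),\,\ldots\bigr),
\]
and split into two cases based on the parity of $k$, pulling the common factor out of the GCD.

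In Case I, with $k$ even, Identity~\eqref{eq:sigma_B} gives $\sigma_B(k,n)=\tfrac{1}{2}P_k\,Q_{k+2n+1}$. Since $k$ is even, $P_k$ is even, so $\tfrac12 P_k$ is a bona fide integer factor that pulls out:
\[
\curl{B}{} \;=\; \tfrac{1}{2}P_k \cdot \gcd\bigl(Q_{k+1},\,Q_{k+3},\,Q_{k+5},\,\ldots\bigr).
\]
The residual GCD collapses to $1$ by Identity~\eqref{eq:gcd_Q} applied to $Q_{k+1}$ and $Q_{k+3}$, yielding $\curl{B}{}=\tfrac{1}{2}P_k$.

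In Case II, with $k$ odd, Identity~\eqref{eq:sigma_B} gives $\sigma_B(k,n)=\tfrac{1}{2}Q_k\,P_{k+2n+1}$. Here $Q_k$ is odd but $k+2n+1$ is even, so $P_{k+2n+1}$ is even, and I rewrite $\sigma_B(k,n)=Q_k\cdot\tfrac{P_{k+2n+1}}{2}$. Pulling out the $Q_k$ factor gives
\[
\curl{B}{} \;=\; Q_k \cdot \gcd\!\left(\tfrac{P_{k+1}}{2},\,\tfrac{P_{k+3}}{2},\,\tfrac{P_{k+5}}{2},\,\ldots\right).
\]
By Identity~\eqref{eq:gcd_P_even}, $\gcd(P_{k+1},P_{k+3})=2$ (both indices are consecutive even integers), so $\gcd\!\left(\tfrac{P_{k+1}}{2},\tfrac{P_{k+3}}{2}\right)=1$ and the residual GCD is $1$, giving $\curl{B}{}=Q_k$.

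The only subtle point, and the one I would make explicit in the writeup, is the integrality bookkeeping for the factor $\tfrac{1}{2}$: in Case I it is absorbed into the common factor $\tfrac12 P_k$, whereas in Case II it must instead be absorbed into the varying factor $\tfrac12 P_{k+2n+1}$. Apart from that, the argument is a direct parallel of the first two cases of Theorems~\ref{thm:curl_P1} and \ref{thm:curl_Q1}.
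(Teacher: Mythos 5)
Your proposal is correct and follows essentially the same route as the paper: both cases factor $\sigma_B(k,n)$ via Identity~\eqref{eq:sigma_B}, pull out the common factor, and finish with Identities~\eqref{eq:gcd_Q} and \eqref{eq:gcd_P_even}. The only difference is cosmetic: in Case II the paper pulls out $\tfrac{1}{2}Q_k$ and uses $\gcd(P_{k+1},P_{k+3},\ldots)=2$ to recover $Q_k$, whereas you absorb the $\tfrac12$ into the varying Pell factor first --- the same computation in a slightly tidier order.
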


\begin{proof}\label{prop:curl_B1}
Let $k\geq 1$ be given. Recall $\sigma_B (k,n) = \sum\limits_{i=1}^{k} B_{n+i}$. So by definition of $\curl{B}{}$, we have 
\begin{equation}
    \curl{B}{} = \gcd\left( \sigma_B(k,0), \sigma_B(k,1), \sigma_B(k,2), \ldots  \right). \label{eq: curl B1}
\end{equation}

\begin{itemize}
\item[] \textbf{Case I.} Suppose $k$ is even. By Identity~\eqref{eq:sigma_B} of Theorem~\ref{thm:sigma_S_cute}, it follows that 
\begin{align*}
    \curl{B}{} &= \gcd\left( \frac{1}{2}P_k Q_{k+1}, \frac{1}{2}P_k Q_{k+3}, \frac{1}{2}P_k Q_{k+5},\ldots   \right) \\
    &= \frac{1}{2}P_k \cdot \gcd\left( Q_{k+1}, Q_{k+3},  Q_{k+5},\ldots   \right) \\
    &= \frac{1}{2}P_k,
\end{align*}
where the last equality holds by Identity~\eqref{eq:gcd_Q} of Theorem~\ref{lem:gcd_Pells_Qells}.

\item[] \textbf{Case II.} Suppose $k$ is odd. By Identity~\eqref{eq:sigma_B} of Theorem~\ref{thm:sigma_S_cute}, it follows that 
\begin{align*}
    \curl{B}{} &= \gcd\left( \frac{1}{2}Q_k P_{k+1}, \frac{1}{2}Q_k P_{k+3}, \frac{1}{2}Q_k P_{k+5},\ldots  \right) \\
    &= \frac{1}{2}Q_k \cdot \gcd\left(  P_{k+1},  P_{k+3},  P_{k+5},\ldots   \right) \\
    &= Q_k,
\end{align*}
where the last equality holds by Identity~\eqref{eq:gcd_P_even} of Theorem~\ref{lem:gcd_Pells_Qells}.\qedhere
\end{itemize}

\end{proof}

\begin{theorem}\label{thm:curl_C1}
For all $k \geq 1$,  the GCD of all sums of $k$ consecutive Lucas-balancing numbers is
\begin{align*}
    \curl{C}{} = \begin{cases}
  2P_k,  & \text{if $k$ is even}; \\
   Q_k,  & \text{if $k$ is odd}.
\end{cases} 
\end{align*}
\end{theorem}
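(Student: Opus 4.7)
The plan is to mirror the proof of Theorem~\ref{thm:curl_B1} very closely, since the structural setup for $\curl{C}{}$ is essentially identical. First, I would unravel the definition and write
\[
\curl{C}{} = \gcd\bigl(\sigma_C(k,0),\, \sigma_C(k,1),\, \sigma_C(k,2),\, \ldots\bigr),
\]
and then invoke Identity~\eqref{eq:sigma_BIG_C} of Theorem~\ref{thm:sigma_S_cute} to rewrite each $\sigma_C(k,n)$ as either $2P_k P_{k+2n+1}$ (when $k$ is even) or $Q_k Q_{k+2n+1}$ (when $k$ is odd). Since the prefactor $2P_k$ or $Q_k$ does not depend on $n$, it immediately pulls out of the GCD.

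Second, I would split into the two parity cases for $k$ and reduce each to a GCD of a two-step-apart subsequence of Pells or associated Pells. When $k$ is even, I would need
\[
\gcd\bigl(P_{k+1},\, P_{k+3},\, P_{k+5},\, \ldots\bigr) = 1,
\]
which follows from Identity~\eqref{eq:gcd_P_odd} of Lemma~\ref{lem:gcd_Pells_Qells} applied to two consecutive odd-indexed terms (noting that $k+1$ is odd). When $k$ is odd, I would need
\[
\gcd\bigl(Q_{k+1},\, Q_{k+3},\, Q_{k+5},\, \ldots\bigr) = 1,
\]
which follows directly from Identity~\eqref{eq:gcd_Q} of Lemma~\ref{lem:gcd_Pells_Qells}. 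Combining either conclusion with the pulled-out prefactor yields $\curl{C}{} = 2P_k$ in the even case and $\curl{C}{} = Q_k$ in the odd case, matching the claimed closed form.

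Unlike the proofs of Theorems~\ref{thm:curl_P1} and \ref{thm:curl_Q1}, there is no odd-$k$ subcase requiring a Cassini-style contradiction argument, because the sum formula for $\sigma_C(k,n)$ already splits cleanly as a product for every $k \geq 1$; this is the same fortunate situation that made Theorem~\ref{thm:curl_B1} short. Consequently I do not anticipate a genuine obstacle. The only minor care point is the bookkeeping of parities of indices: verifying that when $k$ is even the indices $k+2n+1$ are all odd (so that Identity~\eqref{eq:gcd_P_odd} is the correct lemma to cite), and noting that Identity~\eqref{eq:gcd_Q} applies uniformly for the associated Pell case regardless of the parity of $k+1$. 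Once these parity checks are in place, the proof is essentially a two-line computation per case.
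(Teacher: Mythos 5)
Your proposal is correct and follows essentially the same route as the paper's proof: unravel the definition, apply Identity~\eqref{eq:sigma_BIG_C} of Theorem~\ref{thm:sigma_S_cute}, pull the $n$-independent factor ($2P_k$ or $Q_k$) out of the GCD, and finish with Identity~\eqref{eq:gcd_P_odd} in the even case and Identity~\eqref{eq:gcd_Q} in the odd case. Your parity bookkeeping (that $k+2n+1$ is odd when $k$ is even, so the odd-indexed Pell GCD identity applies) is exactly the justification the paper relies on.
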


\begin{proof}\label{prop:curl_C1}
Let $k\geq 1$ be given. Recall $\sigma_C (k,n) = \sum\limits_{i=1}^{k} C_{n+i}$. So by definition of $\curl{C}{}$, we have 
\begin{equation}
    \curl{C}{} = \gcd\left( \sigma_C(k,0), \sigma_C(k,1), \sigma_C(k,2), \ldots  \right). \label{eq: curl C1}
\end{equation}

\begin{itemize}
\item[] \textbf{Case I.} Suppose $k$ is even. By Identity~\eqref{eq:sigma_BIG_C} of Theorem~\ref{thm:sigma_S_cute}, it follows that 
\begin{align*}
    \curl{C}{} &= \gcd\left( 2P_k P_{k+1}, 2P_k P_{k+3}, 2P_k P_{k+5}, \ldots  \right) \\
    &= 2P_k \cdot \gcd( P_{k+1}, P_{k+3}, P_{k+5}, \ldots ) \\
    &= 2P_k,
\end{align*}
where the last equality holds by Identity~\eqref{eq:gcd_P_odd} of Theorem~\ref{lem:gcd_Pells_Qells}.

\item[] \textbf{Case II.} Suppose $k$ is odd. By Identity~\eqref{eq:sigma_BIG_C} of Theorem~\ref{thm:sigma_S_cute}, it follows that  
\begin{align*}
    \curl{C}{} &= \gcd\left( Q_k Q_{k+1}, Q_k Q_{k+3}, Q_k Q_{k+5}, \ldots  \right) \\
    &= Q_k \cdot \gcd( Q_{k+1}, Q_{k+3}, Q_{k+5}, \ldots ) \\
    &= Q_k,
\end{align*}
where the last equality holds by Identity~\eqref{eq:gcd_Q} of Theorem~\ref{lem:gcd_Pells_Qells}.\qedhere
\end{itemize}
\end{proof}

These are the braids for braid sequences $\bigl(\curl{b}{}\bigr)_{k \geq 1}$ in solid blue and $\bigl(\curl{c}{}\bigr)_{k \geq 1}$ in dotted red. In Theorem~\ref{thm:curl_c1}, we give the proof of the $\bigl(\curl{c}{}\bigr)_{k \geq 1}$ braid. However, we leave the proof of the $\bigl(\curl{b}{}\bigr)_{k \geq 1}$ braid to Section~\ref{sec:main_results_for_cobalancing_sequence}.
\begin{center}
\begin{tikzpicture}
\matrix (m) [matrix of math nodes,
             nodes in empty cells,
             nodes={minimum height=4ex,text depth=0.5ex},
             column sep={1.75cm,between origins},
             row sep={1.5cm,between origins}]
{
\gcd(P_1,1) & 4P_2 & \gcd(P_3,3) & 4P_4 & \gcd(P_5,5) & 4P_6 & \gcd(P_7,7) & 4P_8 & \cdots\\
Q_1 & 2\gcd(Q_2,2) & Q_3 & 2\gcd(Q_4,4) & Q_5 & 2\gcd(Q_6,6) & Q_7 & 2\gcd(Q_8,8) & \cdots\\
};
\draw[blue, very thick,->]	(m-1-1) -- node[scale=.7, fill=white, circle] {} (m-2-2);
\draw[red, very thick, dotted, ->]	(m-2-1) -- (m-1-2);
\draw[red,very thick, dotted, ->]	(m-1-2) -- node[pos=.45, scale=.7, fill=white, circle] {} (m-2-3);
\draw[blue, very thick,->]	(m-2-2) --  (m-1-3);

\draw[blue, very thick,->]	(m-1-3) -- node[scale=.7, fill=white, circle] {} (m-2-4);
\draw[red, very thick, dotted, ->]	(m-2-3) -- (m-1-4);
\draw[red, very thick, dotted, ->]	(m-1-4) -- node[pos=.45, scale=.7, fill=white, circle] {} (m-2-5);
\draw[blue,very thick,->]	(m-2-4) -- (m-1-5);

\draw[blue, very thick,->]	(m-1-5) -- node[scale=.7, fill=white, circle] {} (m-2-6);
\draw[red, very thick, dotted, ->]	(m-2-5) -- (m-1-6);
\draw[red,very thick, dotted, ->]	(m-1-6) -- node[pos=.45, scale=.7, fill=white, circle] {} (m-2-7);
\draw[blue, very thick,->]	(m-2-6) -- (m-1-7);

\draw[blue, very thick,->]	(m-1-7) -- node[scale=.7, fill=white, circle] {} (m-2-8);
\draw[red, very thick, dotted, ->]	(m-2-7) -- (m-1-8);
\draw[red, very thick, dotted, ->]	(m-1-8) -- node[scale=.7, fill=white, circle] {} (m-2-9);
\draw[blue, very thick,->]	(m-2-8) -- (m-1-9);
\end{tikzpicture}
\end{center}

\begin{table}[H]
\renewcommand{\arraystretch}{1.4}
\centering
\begin{tabular}{|c||c|c|c|c|c|c|c|c|c|c|c|c|c|c|c|}
\hline
$k$  & \bf{1} & \bf{2} & \bf{3} & \bf{4} & \bf{5} & \bf{6} & \bf{7} & \bf{8} & \bf{9} & \bf{10} & \bf{11} & \bf{12} & \bf{13} \\ \hline\hline
\rowcolor{lightgray}
\blue{$\curl{b}{}$}  & \blue{2} & \blue{2} & \blue{2} & \blue{4} & \blue{2} & \blue{2} & \blue{2} & \blue{8} & \blue{2} & \blue{2} & \blue{2} & \blue{12} & \blue{2} \\ \hline
\red{$\curl{c}{}$}  & \red{1} & \red{8} & \red{7} & \red{48} & \red{41} & \red{280} & \red{239} & \red{1632} & \red{1393} & \red{9512} & \red{8119} & \red{55440} & \red{47321} \\ \hline
\end{tabular}
\caption{The first 13 terms of the sequences $\bigl(\curl{b}{}\bigr)_{k \geq 1}$ and $\bigl(\curl{c}{}\bigr)_{k \geq 1}$.}
\label{table:CurlC_Curlc_numbers}
\end{table}

\begin{theorem}\label{thm:curl_c1}
For all $k \geq 1$, the GCD of all sums of $k$ consecutive Lucas-cobalancing numbers is
\begin{align*}
    \curl{c}{} = \begin{cases}
  4P_k,  & \text{if $k$ is even}; \\
   Q_k,  & \text{if $k$ is odd}.
\end{cases} 
\end{align*}
\end{theorem}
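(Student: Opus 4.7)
The plan is to follow the template of Theorems~\ref{thm:curl_B1} and \ref{thm:curl_C1}, invoking Identity~\eqref{eq:sigma_little_c} of Theorem~\ref{thm:sigma_S_cute} to rewrite $\sigma_c(k,n)$ in factored form, pulling out the factor that is independent of $n$, and then computing the GCD of the remaining terms via Lemma~\ref{lem:gcd_Pells_Qells}. By definition,
$$\curl{c}{} = \gcd\bigl(\sigma_c(k,0),\, \sigma_c(k,1),\, \sigma_c(k,2),\, \ldots\bigr).$$

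In Case~I, suppose $k$ is even. Identity~\eqref{eq:sigma_little_c} yields $\sigma_c(k,n) = 2 P_k P_{k+2n}$, so
$$\curl{c}{} = 2P_k \cdot \gcd\bigl(P_k,\, P_{k+2},\, P_{k+4},\, \ldots\bigr).$$
The indices $k, k+2, k+4, \ldots$ are all even, so Identity~\eqref{eq:gcd_P_even} of Lemma~\ref{lem:gcd_Pells_Qells} gives $\gcd(P_{k+2n}, P_{k+2n+2}) = 2$ for every $n \geq 0$. Hence the GCD of the full sequence divides $2$; conversely, every even-indexed Pell number is even (clear from the recurrence $P_{2m} = 2P_{2m-1} + P_{2m-2}$ together with $P_0 = 0$), so $2$ divides each term. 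Thus the inner GCD equals $2$ and we obtain $\curl{c}{} = 4P_k$ as desired.

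In Case~II, suppose $k$ is odd. Identity~\eqref{eq:sigma_little_c} yields $\sigma_c(k,n) = Q_k Q_{k+2n}$, so
$$\curl{c}{} = Q_k \cdot \gcd\bigl(Q_k,\, Q_{k+2},\, Q_{k+4},\, \ldots\bigr).$$
By Identity~\eqref{eq:gcd_Q}, $\gcd(Q_k, Q_{k+2}) = 1$, which forces the inner GCD down to $1$, giving $\curl{c}{} = Q_k$.

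I expect no serious obstacle: once Theorem~\ref{thm:sigma_S_cute} is in hand, the argument reduces to picking the correct GCD identity from Lemma~\ref{lem:gcd_Pells_Qells} for the indices that appear. The only subtlety is that in Case~I, pairwise gcd of $2$ among the terms $P_{k+2n}$ is a strictly weaker fact than what the argument needs; one must also observe that $2$ divides every such term, so that the inner GCD is exactly $2$ rather than merely a divisor of $2$. This is the one genuine difference from the analogous Case~I of Theorem~\ref{thm:curl_C1}, where Identity~\eqref{eq:gcd_P_odd} supplied a pairwise gcd of $1$ and no divisibility check was needed.
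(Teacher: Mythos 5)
Your proof is correct and follows essentially the same route as the paper's: apply Identity~\eqref{eq:sigma_little_c}, factor out $2P_k$ (resp.\ $Q_k$), and evaluate the inner GCD via Identities~\eqref{eq:gcd_P_even} and \eqref{eq:gcd_Q}. Your extra observation in Case~I --- that the pairwise gcd of $2$ only bounds the inner GCD above, and that one must also note every even-indexed Pell number is even to pin it down to exactly $2$ --- is a valid point of care that the paper's proof leaves implicit.
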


\begin{proof}\label{prop:curl_c1}
Let $k\geq 1$ be given. Recall $\sigma_c (k,n) = \sum\limits_{i=1}^{k} c_{n+i}$. So by definition of $\curl{c}{}$, we have 
\begin{equation}
    \curl{c}{} = \gcd\left( \sigma_c(k,0), \sigma_c(k,1), \sigma_c(k,2), \ldots  \right). \label{eq: curl c1}
\end{equation}

\begin{itemize}
\item[] \textbf{Case I.} Suppose $k$ is even. By Identity~\eqref{eq:sigma_little_c} of Theorem~\ref{thm:sigma_S_cute}, it follows that 
\begin{align*}
    \curl{c}{} &= \gcd\left( 2P_k P_k, 2P_k P_{k+2}, 2P_k P_{k+4}, \ldots  \right) \\
    &= 2P_k \cdot \gcd( P_k, P_{k+2}, P_{k+4}, \ldots ) \\
    &= 4P_k,
\end{align*}
where the last equality holds by Identity~\eqref{eq:gcd_P_even} of Theorem~\ref{lem:gcd_Pells_Qells}.

\item[] \textbf{Case II.} Suppose $k$ is odd. By Identity~\eqref{eq:sigma_little_c} of Theorem~\ref{thm:sigma_S_cute}, it follows that 
\begin{align*}
    \curl{c}{} &= \gcd\left( Q_k Q_k, Q_k Q_{k+2}, Q_k Q_{k+4}, \ldots  \right) \\
    &= Q_k \cdot \gcd(Q_k, Q_{k+2}, Q_{k+4}, \ldots) \\
    &= Q_k,
\end{align*}
where the last equality holds by Identity~\eqref{eq:gcd_Q} of Theorem~\ref{lem:gcd_Pells_Qells}.\qedhere
\end{itemize}
\end{proof}

\section{Main results for \texorpdfstring{$\curl{b}{}$}{cobalancing sequence GCD}}\label{sec:main_results_for_cobalancing_sequence}

For the five sequences Pell $(P_n)_{n\geq 0}$, associated Pell $(Q_n)_{n\geq 0}$, balancing $(B_n)_{n\geq 0}$, Lucas-balancing $(C_n)_{n\geq 0}$, and Lucas-cobalancing $(c_n)_{n\geq 0}$, the closed forms of the GCD of all sums of $k$ consecutive terms involved braids of Pell and associate Pell numbers. However, in the setting of the cobalancing numbers, something much different occurs, namely we have the following closed form for the GCD of all sums of $k$ consecutive cobalancing numbers:
\begin{align}
    \curl{b}{} = \begin{cases}
  \gcd(P_k, k),  & \text{if $k$ is even}; \\
   2\gcd(Q_k, k),  & \text{if $k$ is odd}.
\end{cases} \label{eq:cobalancing_main_result}
\end{align}
To prove this, we first derive an intermediary form of $\curl{b}{}$ in Theorem~\ref{thm:curl_b1_intermediary_result} of Subsection~\ref{subsec:cobalancing_intermediary_result}. Then we prove our main result, Identity~\eqref{eq:cobalancing_main_result}, in Theorem~\ref{thm:curl_b1_main_result} of Subsection~\ref{subsec:cobalancing_main_result}.

\subsection{An intermediary result for \texorpdfstring{$\curl{b}{}$}{cobalancing sequence GCD}}\label{subsec:cobalancing_intermediary_result}

In the proof of Theorem~\ref{thm:curl_b1_intermediary_result}, we use the following easily-derived GCD result (see~\cite[Lemma~3.1]{Mbirika_Spilker2022}).
\begin{lemma}\label{lem:Jurgs_crazy_gcd_stuff}
    Let $(a_i)_{i \geq 0}$ be a sequence of integers. Then the following identity holds: 
    $$\gcd(a_0, a_1, a_2, a_3 \ldots) = \gcd(a_0, a_1 - a_0, a_2 - a_1, a_3 - a_2, \ldots).$$
\end{lemma}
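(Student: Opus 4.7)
The plan is to establish the identity by showing that the two tuples $(a_0, a_1, a_2, \ldots)$ and $(a_0, a_1 - a_0, a_2 - a_1, \ldots)$ share exactly the same set of positive common divisors. Since the GCD of a collection is by definition the largest positive integer dividing every element of that collection, equality of the common-divisor sets immediately forces equality of their greatest elements, which is what the lemma asserts.

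First I would verify the forward containment: if a positive integer $d$ divides every $a_i$, then trivially $d \mid a_0$, and $d$ divides each difference $a_{i+1} - a_i$ as a $\mathbb{Z}$-linear combination of two multiples of $d$. For the reverse containment I would induct on $i$: suppose $d \mid a_0$ and $d \mid a_{j+1} - a_j$ for every $j \geq 0$. The base case $d \mid a_0$ is given by hypothesis. For the inductive step, assuming $d \mid a_j$, the equality $a_{j+1} = a_j + (a_{j+1} - a_j)$ expresses $a_{j+1}$ as a sum of two multiples of $d$, hence $d \mid a_{j+1}$. Thus the two common-divisor sets coincide.

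The underlying principle is simply the elementary identity $\gcd(a,b) = \gcd(a, b-a)$, iterated along the entire sequence---essentially a telescoping observation. There is no genuine obstacle here: the transformation $(a_0, a_1, a_2, \ldots) \mapsto (a_0, a_1 - a_0, a_2 - a_1, \ldots)$ is $\mathbb{Z}$-invertible via partial sums, and this invertibility is exactly what guarantees the preservation of the common-divisor set in both directions. The only minor point worth addressing is the well-definedness of the GCD of an infinite collection, which follows from the fact that the GCDs of its finite prefixes form a non-increasing sequence of positive integers and hence stabilize after finitely many steps.
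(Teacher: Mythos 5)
Your proof is correct and complete; the paper itself gives no argument for this lemma, simply citing it as Lemma~3.1 of the earlier Mbirika--Spilker paper and calling it ``easily-derived.'' Your two-way containment of the common-divisor sets (trivial forward direction, induction via telescoping for the reverse) is exactly the standard argument that citation refers to, and your closing remark on why the GCD of an infinite collection is well defined is a nice touch the paper leaves implicit.
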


\begin{theorem}\label{thm:sigma_b_cute}
For all $k\geq 1$, set $\sigma_b (k,n) := \sum\limits_{i=1}^{k} b_{n+i}$. Then the following identity holds:
\begin{align}
\sigma_b (k,n) = \frac{1}{2} (B_{k+n} - B_n - k).\label{eq:sigma_b}
\end{align}
\end{theorem}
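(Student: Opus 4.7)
The plan is to reduce $\sigma_b(k,n)$ to an expression involving $B$-values via a short auxiliary identity, then let that identity do all the work. First, I would split
\[
\sigma_b(k,n) = \sum_{i=1}^{k+n} b_i - \sum_{i=1}^n b_i
\]
and invoke Identity~\eqref{eq:sum_b_i} of Lemma~\ref{lem:sum_S_i} twice to obtain
\[
\sigma_b(k,n) = \tfrac{1}{4}\bigl(b_{k+n+1} - b_{k+n} - b_{n+1} + b_n - 2k\bigr).
\]

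Next, I would establish the auxiliary identity $b_{m+1} - b_m = 2B_m$ for every $m \geq 0$. This follows from a one-line Binet computation once one notices that $\gamma = 1+\sqrt{2}$ and $\delta = 1-\sqrt{2}$ satisfy $\gamma^2 - 1 = 2\gamma$ and $\delta^2 - 1 = 2\delta$; indeed
\[
b_{m+1} - b_m = \frac{\gamma^{2m-1}(\gamma^2 - 1) - \delta^{2m-1}(\delta^2 - 1)}{4\sqrt{2}} = \frac{\gamma^{2m} - \delta^{2m}}{2\sqrt{2}} = 2B_m.
\]

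Finally, substituting $b_{k+n+1} - b_{k+n} = 2B_{k+n}$ and $b_{n+1} - b_n = 2B_n$ back into the first displayed expression collapses everything to
\[
\sigma_b(k,n) = \tfrac{1}{4}\bigl(2B_{k+n} - 2B_n - 2k\bigr) = \tfrac{1}{2}\bigl(B_{k+n} - B_n - k\bigr),
\]
which is exactly Identity~\eqref{eq:sigma_b}.

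The only conceivable obstacle is the auxiliary relation $b_{m+1} - b_m = 2B_m$, and even that is essentially free from the Binet forms listed in Section~\ref{sec:definitions}. As a fully self-contained alternative, one can skip the intermediate four-term expression altogether by applying the Binet form of $b_n$ directly to $\sigma_b(k,n)$: the inner sums become geometric series in $\gamma^2$ and $\delta^2$, and the same identities $\gamma^2-1=2\gamma$ and $\delta^2-1=2\delta$ simplify the closed form into $\tfrac{1}{2}(B_{k+n} - B_n) - \tfrac{k}{2}$ in a single calculation. Either route is short; the telescoping route has the virtue of reusing Lemma~\ref{lem:sum_S_i} and isolating the one new ingredient, namely the relation between consecutive cobalancing numbers and balancing numbers.
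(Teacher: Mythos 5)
Your proof is correct and follows essentially the same route as the paper: split the sum, apply Identity~\eqref{eq:sum_b_i} twice, and collapse via the relation $b_{r+1}-b_r=2B_r$. The only difference is cosmetic --- the paper cites Panda and Ray for that auxiliary identity, whereas you derive it directly from the Binet forms (correctly, using $\gamma^2-1=2\gamma$ and $\delta^2-1=2\delta$).
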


\begin{proof}
    Let $k\geq 1$ be given. Observe that 
    \begin{align*}
        \sigma_b (k,n) := \sum_{i=1}^{k} b_{n+i} &= \sum_{i=1}^{k+n} b_{i} - \sum_{i=1}^{n} b_{i} \\
        &= \frac{1}{4} \left ( b_{k+n+1} - b_{k+n} - 2(k+n) \right ) - \frac{1}{4} \left ( b_{n+1} - b_n - 2n \right )\\
        &= \frac{1}{4} \left ( 2B_{k+n} - 2k -2n \right ) - \frac{1}{4} \left ( 2B_n - 2n \right ) \\
        &= \frac{1}{4} \left ( 2B_{k+n} - 2B_n - 2k \right ) \\
        &= \frac{1}{2} \left ( B_{k+n} - B_n - k \right ),
    \end{align*}
    where the second equality holds by Identity~\eqref{eq:sum_b_i} of Lemma~\ref{lem:sum_S_i}, and the third equality holds from the identity $2B_r = b_{r+1} - b_r$ for all $r \geq 1$ by Panda and Ray~\cite[Corollary~4.2]{Panda_Ray2005}. 
\end{proof}

\begin{lemma}\label{lem:t_n_result}
For all $i,k \geq 1$, the following identity holds:
$$ \frac{1}{2}(P_{2k+2i} - P_{2i}) - \frac{1}{2}(P_{2k+2i-2} - P_{2i-2}) = \begin{cases}
2 P_{k} Q_{k+2i-1}, &\text{if $k$ is even}; \\
2 Q_{k} P_{k+2i-1}, &\text{if $k$ is odd}.
\end{cases}
$$
\end{lemma}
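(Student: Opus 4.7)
The plan is to apply Lemma~\ref{lem:Pell_s_plus_r} twice, once to each of the two bracketed differences, and then simplify the resulting expression using the defining recurrences of $\PellSeq$ and $\QellSeq$.

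First I would apply Lemma~\ref{lem:Pell_s_plus_r} to the difference $P_{2k+2i} - P_{2i}$ by setting $s := 2k$ and $r := 2i$; note that $s \equiv 0 \pmod 4$ exactly when $k$ is even and $s \equiv 2 \pmod 4$ exactly when $k$ is odd, which matches the two cases in the target identity. This gives
\[
P_{2k+2i} - P_{2i} = \begin{cases} 2P_k\, Q_{k+2i}, & \text{if $k$ is even;}\\ 2Q_k\, P_{k+2i}, & \text{if $k$ is odd.} \end{cases}
\]
Then I would apply Lemma~\ref{lem:Pell_s_plus_r} again, this time with $s := 2k$ and $r := 2i-2$, yielding an analogous expression with index $k+2i-2$ in the second factor. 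Subtracting and pulling the factor of $\tfrac{1}{2}$ through, the common factor $P_k$ (in the even-$k$ case) or $Q_k$ (in the odd-$k$ case) factors out.

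Second, I would handle the remaining difference. In the even-$k$ case this is $Q_{k+2i} - Q_{k+2i-2}$, and in the odd-$k$ case it is $P_{k+2i} - P_{k+2i-2}$. Both collapse immediately via the defining recurrences $Q_n = 2Q_{n-1} + Q_{n-2}$ and $P_n = 2P_{n-1} + P_{n-2}$ from Definition~\ref{def:Pell_Qell_numbers}, which rearrange as $Q_{m+2} - Q_m = 2Q_{m+1}$ and $P_{m+2} - P_m = 2P_{m+1}$. Setting $m := k+2i-2$ gives the factor $2Q_{k+2i-1}$ (resp.\ $2P_{k+2i-1}$). Combining with the factored $P_k$ or $Q_k$ produces the claimed closed forms $2P_k Q_{k+2i-1}$ and $2Q_k P_{k+2i-1}$ respectively.

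There is essentially no hard step here; the result is a direct corollary of Lemma~\ref{lem:Pell_s_plus_r} together with the two-step recurrence identity. The only point requiring a little care is bookkeeping the parity condition so that Case~I of Lemma~\ref{lem:Pell_s_plus_r} is applied simultaneously to both $P_{2k+2i}-P_{2i}$ and $P_{2k+2i-2}-P_{2i-2}$ (and similarly for Case~II), which is automatic since $s=2k$ is the same in both invocations. Alternatively, one could bypass Lemma~\ref{lem:Pell_s_plus_r} entirely and prove the statement by a direct Binet-formula calculation using $\gamma = 1+\sqrt{2}$ and $\delta = 1-\sqrt{2}$, splitting on the parity of $k$ exactly as in the proofs of Lemmas~\ref{lem:Pell_s_plus_r} and \ref{lem:Qell_s_plus_r}, but the route through the already-proven lemmas is shorter and more in the spirit of the paper.
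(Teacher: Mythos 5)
Your argument is correct and uses exactly the same two ingredients as the paper --- Lemma~\ref{lem:Pell_s_plus_r} and the order-two recurrence $X_{m+2}-X_m = 2X_{m+1}$ --- only in the opposite order: the paper regroups the four Pell terms first, collapses $P_{2k+2i}-P_{2k+2i-2}$ and $P_{2i}-P_{2i-2}$ via the recurrence to get the single difference $P_{2k+2i-1}-P_{2i-1}$, and then invokes Lemma~\ref{lem:Pell_s_plus_r} once with $s:=2k$, $r:=2i-1$, whereas you invoke the lemma twice and apply the recurrence at the end. Both routes are equally short and the parity bookkeeping is identical. The one point to patch in your version: your second invocation uses $r:=2i-2$, which equals $0$ when $i=1$, outside the stated hypothesis $r\geq 1$ of Lemma~\ref{lem:Pell_s_plus_r}; the identity does extend to $r=0$ (since $P_0=0$ and $P_{2k}=2P_kQ_k$, and the Binet computation in that lemma's proof never uses $r\geq 1$), but you should either note this or treat $i=1$ separately --- the paper's ordering sidesteps the issue because $r=2i-1\geq 1$ always. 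Incidentally, the paper's own final display contains a typo ($Q_{k+2i}$ and $P_{k+2i}$ where the subscript should be $k+2i-1$); your indices are the correct ones and match both the lemma statement and its later use in Theorem~\ref{thm:curl_b1_intermediary_result}.
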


\begin{proof}
For ease of notation, set $t_i := \frac{1}{2}(P_{2k+2i} - P_{2i}) - \frac{1}{2}(P_{2k+2i-2} - P_{2i-2})$. Then we have
\begin{align*}
t_i &= \frac{1}{2}\bigl((P_{2k+2i} - P_{2k+2i-2}) - (P_{2i} - P_{2i-2}) \bigr)\\
    &= \frac{1}{2} (2 P_{2k+2i-1} - 2 P_{2i-1}) &\text{(by the Pell recurrence)}\\
    &=  P_{2k+2i-1} - P_{2i-1}\\
    &= \begin{cases}
2 P_k Q_{k+2i}, &\text{if $k$ is even}; \\
2 Q_k P_{k+2i}, &\text{if $k$ is odd},
\end{cases}
\end{align*}
where the last equality holds by Lemma~\ref{lem:Pell_s_plus_r}, if we set $s:=2k$ and $r:=2i-1$ in the third equality. Then observe that $s \equiv 0 \pmod{4}$ if and only if $k$ is even, and $s \equiv 2 \pmod{4}$ if and only if $k$ is odd.
\end{proof}

\begin{theorem}\label{thm:curl_b1_intermediary_result}
For all $k \geq 1$, the GCD of all sums of $k$ consecutive cobalancing numbers is
\begin{align*}
    \curl{b}{} = \begin{cases}
  \gcd\left (\frac{1}{2}( B_k - k), P_k \right ),  & \text{if $k$ is even}; \\
   \gcd\left (\frac{1}{2} (B_k - k), 2Q_k \right ),  & \text{if $k$ is odd}.
\end{cases} 
\end{align*}
\end{theorem}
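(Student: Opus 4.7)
The plan is to mimic the structure of the earlier main-result proofs (Theorems~\ref{thm:curl_P1}--\ref{thm:curl_c1}), but now with the twist that the quantity $\sigma_b(k,n)$ given in Theorem~\ref{thm:sigma_b_cute} does not factor cleanly as a single product; instead it is $\frac{1}{2}(B_{k+n} - B_n - k)$. The extra $-k$ is what prevents a direct Pell/associated-Pell factorization and forces the ``$\gcd(\ldots,\, k)$-flavored'' answer. To handle this, I will use Lemma~\ref{lem:Jurgs_crazy_gcd_stuff} to trade consecutive terms for consecutive differences, which will eliminate the $-k$ inside the bracket.

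First I would observe that by Lemma~\ref{lem:Jurgs_crazy_gcd_stuff},
\[
  \curl{b}{} \;=\; \gcd\bigl(\sigma_b(k,0),\, \sigma_b(k,1)-\sigma_b(k,0),\, \sigma_b(k,2)-\sigma_b(k,1),\,\ldots\bigr).
\]
Since $B_0 = 0$, Theorem~\ref{thm:sigma_b_cute} gives $\sigma_b(k,0) = \tfrac12(B_k-k)$, which accounts for the first slot in the final answer. For the consecutive differences, I would compute
\[
\sigma_b(k,n) - \sigma_b(k,n-1) = \tfrac{1}{2}\bigl((B_{k+n}-B_{k+n-1})-(B_n-B_{n-1})\bigr),
\]
and then rewrite each balancing difference via $B_r = \tfrac12 P_{2r}$ to get a $\tfrac14$ times a four-term alternating expression in Pell numbers. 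That expression is exactly $\tfrac12 t_n$ with $t_n$ as in Lemma~\ref{lem:t_n_result} (after reindexing $i \leftrightarrow n$), so the difference simplifies to $P_k Q_{k+2n-1}$ when $k$ is even and to $Q_k P_{k+2n-1}$ when $k$ is odd.

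Plugging this back in, the problem reduces to
\[
\curl{b}{} = \begin{cases}
\gcd\!\bigl(\tfrac12(B_k-k),\, P_k\cdot\gcd(Q_{k+1},Q_{k+3},Q_{k+5},\ldots)\bigr), & k \text{ even},\\[2pt]
\gcd\!\bigl(\tfrac12(B_k-k),\, Q_k\cdot\gcd(P_{k+1},P_{k+3},P_{k+5},\ldots)\bigr), & k \text{ odd}.
\end{cases}
\]
For $k$ even, Identity~\eqref{eq:gcd_Q} of Lemma~\ref{lem:gcd_Pells_Qells} yields $\gcd(Q_{k+1},Q_{k+3},\ldots)=1$, giving $\gcd(\tfrac12(B_k-k),P_k)$. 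For $k$ odd, the indices $k+1,k+3,\ldots$ are all even, so Identity~\eqref{eq:gcd_P_even} shows any two consecutive terms in the list have gcd $2$, and since each $P_{k+2j-1}$ is even the whole gcd equals exactly $2$; this yields $\gcd(\tfrac12(B_k-k),2Q_k)$.

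I do not anticipate a serious obstacle: the heavy lifting was done in Theorem~\ref{thm:sigma_b_cute} and Lemma~\ref{lem:t_n_result}, and the structure is parallel to the previous proofs. The one subtle point is making sure the $\gcd$-of-Pells computation in the odd case returns $2$ (not $1$), i.e.\ verifying that $2$ divides every $P_{k+2j-1}$ in addition to the pairwise gcd being $2$, so that the factor $2$ gets correctly absorbed into $2Q_k$. Once this is tracked, the two cases drop out immediately.
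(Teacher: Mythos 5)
Your proposal is correct and follows essentially the same route as the paper: apply Lemma~\ref{lem:Jurgs_crazy_gcd_stuff} to the sequence $\bigl(\sigma_b(k,n)\bigr)_{n\geq 0}$ from Theorem~\ref{thm:sigma_b_cute}, identify the consecutive differences with the quantities $t_n$ of Lemma~\ref{lem:t_n_result}, and finish with Identities~\eqref{eq:gcd_Q} and \eqref{eq:gcd_P_even}. Your extra remark that every $P_{k+2j-1}$ (with $k$ odd, hence even index) is itself even, so the gcd of that list is exactly $2$, is a slightly more careful justification of the step the paper attributes directly to Identity~\eqref{eq:gcd_P_even}.
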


\begin{proof}
    Let $k \geq 1$ be given. Recall $\sigma_b (k,n) = \sum\limits_{i=1}^k b_{n+i}$. So by definition of $\curl{b}{}$, we have 
    \begin{align*}
    \curl{b}{} &= \gcd\left( \sigma_b(k,0), \sigma_b(k,1), \sigma_b(k,2), \ldots  \right)\\
    &= \gcd\left( \frac{1}{2} (B_k - k), \frac{1}{2} (B_{k+1} - B_1 - k), \frac{1}{2} (B_{k+2} - B_2 - k), \ldots \right),
\end{align*}
    where the second equality holds by Theorem~\ref{thm:sigma_b_cute}. For ease of notation, set $r_i := B_{k+i} - B_i - k$ and $s_i := B_{k+i} - B_i - (B_{k+i-1} - B_{i-1})$ and $t_i := \frac{1}{2}(P_{2k+2i} - P_{2i}) - \frac{1}{2}(P_{2k+2i-2} - P_{2i-2})$. It is clear that $s_i = r_i - r_{i-1}$ for all $i \geq 1$. Moreover by the Binet formulas, $B_i = \frac{P_{2i}}{2}$ holds, so we have $s_i = t_i$ for all $i \geq 1$. Observe that 
    \begin{align*}
        \curl{b}{} &= \frac{1}{2} \gcd\left ( r_0, r_1, r_2, \ldots \right ) \\
        &= \frac{1}{2} \gcd \left( B_k - k, s_1, s_2, \ldots \right ) &\text{(by Lemma~\ref{lem:Jurgs_crazy_gcd_stuff})} \\
        &= \frac{1}{2} \gcd\left ( B_k - k, \gcd(s_1, s_2, \ldots )  \right ) \\
        &= \frac{1}{2} \gcd\left ( B_k - k, \gcd(t_1, t_2, \ldots )  \right ). 
    \end{align*}

\begin{itemize}
\item[] \textbf{Case I.} Suppose $k$ is even. Then $t_i = 2 P_k Q_{k+2i-1}$  for all $i \geq 1$ by Lemma~\ref{lem:t_n_result}. It follows~that 
\begin{align*}
    \curl{b}{} &= \frac{1}{2} \gcd\left ( B_k - k, \gcd(2P_k Q_{k+1}, 2P_k Q_{k+3}, 2P_k Q_{k+5}, \ldots \right ) \\
    &= \frac{1}{2} \gcd\left ( B_k - k, 2P_k \cdot \gcd( Q_{k+1}, Q_{k+3}, Q_{k+5}, \ldots) \right ) \\
    &= \frac{1}{2} \gcd\left ( B_k - k, 2P_k \right ) \\
    &= \gcd\left (\frac{1}{2}( B_k - k), P_k \right ),
\end{align*}
where the third equality holds by Identity~\eqref{eq:gcd_Q} of Theorem~\ref{lem:gcd_Pells_Qells}.

\item[] \textbf{Case II.} Suppose $k$ is odd. Then $t_i = 2Q_k P_{k+2i-1}$ for all $i \geq 1$ by Lemma~\ref{lem:t_n_result}. It follows that 
\begin{align*}
    \curl{b}{} &= \frac{1}{2} \gcd\left ( B_k - k, \gcd(2Q_k P_{k+1}, 2Q_k P_{k+3}, 2Q_k P_{k+5}, \ldots) \right ) \\
    &= \frac{1}{2} \gcd\left ( B_k - k, 2Q_k \cdot \gcd( P_{k+1}, P_{k+3}, P_{k+5}, \ldots) \right ) \\
    &= \frac{1}{2} \gcd\left ( B_k - k, 4Q_k \right ) \\
    &= \gcd\left (\frac{1}{2} (B_k - k), 2Q_k \right ),
\end{align*}
where the third equality holds by Identity~\eqref{eq:gcd_P_even} of Theorem~\ref{lem:gcd_Pells_Qells}.\qedhere
\end{itemize}
\end{proof}

\subsection{Our main result for \texorpdfstring{$\curl{b}{}$}{cobalancing sequence GCD} involving \texorpdfstring{$\gcd(P_k,k)$}{gcd(Pk,k} and \texorpdfstring{$\gcd(Q_k,k)$}{gcd(Qk,k}}\label{subsec:cobalancing_main_result}

To prove the results in this subsection, we use the $p$-adic valuation function and some of its well-known properties in Lemma~\ref{lem:Carl_is_Filipino} whose proofs we omit. In this subsection, we sometimes apply Lemma~\ref{lem:Carl_is_Filipino} without reference.

\begin{definition}
    For each $n \geq 1$ and $p$ a prime, the \textit{$p$-adic valuation} of $n$, denoted $\nu_p(n)$, is the smallest nonnegative integer $k$ such that $p^k$ divides $n$.
\end{definition}

\begin{lemma}\label{lem:Carl_is_Filipino}
  For all $a,b \in \mathbb{Z}$ and $p$ a prime, the following identities hold:
  \begin{align}
  \nu_p( \gcd(a, b) ) &= \min\left(\nu_p(a), \nu_p(b)\right), \label{eq:p_adic_prop_1}\\
  \nu_p(a \cdot b) &= \nu_p(a) + \nu_p(b). \label{eq:p_adic_prop_2}
  \end{align}
\end{lemma}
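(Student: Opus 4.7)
The plan is to deduce both identities from the fundamental fact that every nonzero integer $n$ admits a unique factorization $n = \pm\, p^{\nu_p(n)} m$ with $p \nmid m$. Writing $a = \pm\, p^{\alpha} a'$ and $b = \pm\, p^{\beta} b'$ with $\alpha = \nu_p(a)$, $\beta = \nu_p(b)$, and $p \nmid a'b'$, each identity should reduce to a short calculation on exponents. Since $\nu_p(-n) = \nu_p(n)$, signs play no role and may be ignored throughout.

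For Identity~\eqref{eq:p_adic_prop_2}, I would simply multiply the two factorizations to obtain $ab = p^{\alpha+\beta}(a'b')$. Because $p$ is prime and divides neither $a'$ nor $b'$, it cannot divide $a'b'$, so uniqueness of the $p$-adic factorization forces $\nu_p(ab) = \alpha + \beta$.

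For Identity~\eqref{eq:p_adic_prop_1}, I would assume without loss of generality that $\alpha \leq \beta$, pull out $p^{\alpha}$ as a common factor, and write $\gcd(a,b) = p^{\alpha} \gcd(a',\, p^{\beta-\alpha} b')$. Since $p \nmid a'$, the integer $a'$ is coprime to every power of $p$, so this reduces to $p^{\alpha} \gcd(a', b')$; as $\gcd(a', b')$ is itself coprime to $p$, the $p$-adic valuation of the product is exactly $\alpha = \min(\nu_p(a), \nu_p(b))$.

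No substantive obstacle will appear: both identities are essentially immediate consequences of unique factorization once the $p$-part is separated from the $p$-free part. The only bookkeeping issue is the degenerate case $a = 0$ or $b = 0$, which is handled uniformly by the convention $\nu_p(0) = +\infty$ together with $\gcd(0, b) = |b|$ and $0 \cdot b = 0$; both identities remain valid under this convention. This is presumably why the authors chose to omit the proofs.
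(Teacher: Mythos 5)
Your proof is correct. Note, however, that the paper offers no proof to compare against: the authors explicitly state that these are well-known properties ``whose proofs we omit,'' so your argument via unique factorization --- separating each integer into its $p$-part $p^{\nu_p(\cdot)}$ and a $p$-free cofactor, then computing on exponents --- fills a gap the paper deliberately leaves open rather than diverging from an existing argument. Your attention to the degenerate cases is in fact slightly more careful than the paper itself, whose definition of $\nu_p(n)$ is stated only for $n \geq 1$ (and, as written, says ``smallest'' where ``largest'' is clearly intended), while the lemma is asserted for all $a, b \in \mathbb{Z}$; your conventions $\nu_p(0) = +\infty$ and $\nu_p(-n) = \nu_p(n)$ are exactly what is needed to make the statement literally true as claimed.
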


\begin{lemma}\label{lem:Jurgen_p_adic_lemma_1}
    For all $k\geq 1$, we have $\nu_2 (P_k) = \nu_2 (k)$.
\end{lemma}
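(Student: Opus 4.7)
The plan is to use two tools: the parity patterns of $P_n$ and $Q_n$, and the Pell doubling identity $P_{2n} = 2 P_n Q_n$. The idea is to reduce the valuation at an even index to the valuation at half that index, and iterate.

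First, I would establish the parity facts. Reducing the Pell recurrence $P_n = 2P_{n-1} + P_{n-2}$ modulo $2$ gives $P_n \equiv P_{n-2} \pmod{2}$; together with $P_0 = 0$ and $P_1 = 1$, this shows that $P_n$ is even if and only if $n$ is even. In particular, $\nu_2(P_k) = 0 = \nu_2(k)$ whenever $k$ is odd. An identical mod-$2$ argument applied to $Q_n = 2Q_{n-1} + Q_{n-2}$ with $Q_0 = Q_1 = 1$ shows that every $Q_n$ is odd, i.e., $\nu_2(Q_n) = 0$ for all $n \geq 0$.

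Second, I would derive the doubling formula $P_{2n} = 2 P_n Q_n$ directly from the Binet formulas introduced in Section~\ref{sec:definitions}:
\[
P_{2n} = \frac{\gamma^{2n} - \delta^{2n}}{2\sqrt{2}} = \frac{(\gamma^n - \delta^n)(\gamma^n + \delta^n)}{2\sqrt{2}} = 2 \cdot \frac{\gamma^n - \delta^n}{2\sqrt{2}} \cdot \frac{\gamma^n + \delta^n}{2} = 2 P_n Q_n.
\]

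Third, I would complete the argument by strong induction on $k$. The odd case is handled by the parity observation above. For the even case, write $k = 2n$ with $n \geq 1$. Applying the doubling formula and the multiplicativity property \eqref{eq:p_adic_prop_2} of $\nu_2$, together with $\nu_2(Q_n) = 0$, yields
\[
\nu_2(P_k) = \nu_2(2 P_n Q_n) = 1 + \nu_2(P_n) + \nu_2(Q_n) = 1 + \nu_2(P_n).
\]
By the inductive hypothesis applied to $n < k$, we have $\nu_2(P_n) = \nu_2(n)$, so $\nu_2(P_k) = 1 + \nu_2(n) = \nu_2(2n) = \nu_2(k)$, as desired.

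There is no serious obstacle here; the entire proof rests on the doubling identity $P_{2n} = 2 P_n Q_n$ combined with the fact that the associated Pell numbers are all odd. The only care point is to correctly set up the induction so that the odd case serves as the base step for every ``descent chain'' of halvings, which is automatic with strong induction.
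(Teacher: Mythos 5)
Your proof is correct, but it takes a genuinely different route from the paper. The paper disposes of this lemma in one line by citing a more general theorem of Sanna on the $p$-adic valuation of Lucas sequences (\cite[Theorem~1.5]{Sanna2016}), which in particular covers valuations at arbitrary primes, not just $p=2$. You instead give a self-contained elementary argument: the mod-$2$ periodicity of the recurrences gives $\nu_2(P_k)=0=\nu_2(k)$ for odd $k$ and $\nu_2(Q_n)=0$ for all $n$, the Binet factorization gives the doubling identity $P_{2n}=2P_nQ_n$, and strong induction then yields $\nu_2(P_{2n}) = 1+\nu_2(P_n) = 1+\nu_2(n) = \nu_2(2n)$. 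Every step checks out, and the induction is correctly anchored since each halving chain terminates at an odd index. What your approach buys is independence from the external reference and transparency about exactly where the factor of $2$ enters (namely, once per halving, via $P_{2n}=2P_nQ_n$ with $Q_n$ odd); what the paper's citation buys is brevity and the context of a uniform statement for all primes. Incidentally, the identity $P_{2k}=2P_kQ_k$ that you derive is also used, without separate proof, in the paper's Lemma~\ref{lem:Jurgen_p_adic_lemma_2}, so your derivation of it from the Binet forms is a welcome addition rather than a redundancy.
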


\begin{proof}
This follows from a more general result by Sanna (see~\cite[Theorem~1.5]{Sanna2016}).
\end{proof}

\begin{lemma}\label{lem:Jurgen_p_adic_lemma_2}
    For all $k\geq 2$, we have $\nu_2 (P_k Q_k - k) \geq 2$.
\end{lemma}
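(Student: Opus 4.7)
The plan is to first replace $P_k Q_k$ with $B_k$ and then prove the congruence $B_k \equiv k \pmod{4}$ by induction using the balancing recurrence. Indeed, from the Binet formulas in Section~\ref{sec:definitions} we have
\[
P_k Q_k = \frac{\gamma^k - \delta^k}{2\sqrt{2}} \cdot \frac{\gamma^k + \delta^k}{2} = \frac{\gamma^{2k} - \delta^{2k}}{4\sqrt{2}} = B_k,
\]
so the statement $\nu_2(P_k Q_k - k) \geq 2$ is equivalent to $4 \mid B_k - k$, which is what I will establish.

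For the induction, the base cases are immediate: $B_2 - 2 = 4$ and $B_3 - 3 = 32$, both divisible by $4$. For the inductive step, suppose $B_{k-1} \equiv k-1 \pmod{4}$ and $B_{k-2} \equiv k-2 \pmod{4}$ for some $k \geq 4$. Using the recurrence $B_k = 6 B_{k-1} - B_{k-2}$ from Definition~\ref{def:balancing_and_cobalancing_numbers}, we compute modulo $4$:
\[
B_k \equiv 6(k-1) - (k-2) = 5k - 4 \equiv k \pmod{4}.
\]
Hence $B_k \equiv k \pmod{4}$ for all $k \geq 2$, and therefore $\nu_2(B_k - k) = \nu_2(P_k Q_k - k) \geq 2$, as desired.

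There is no real obstacle here: once the identity $P_k Q_k = B_k$ is invoked, the residues of the balancing sequence modulo $4$ are seen to form the cycle $0,1,2,3,0,1,2,3,\ldots$, matching the residues of $k$, and a one-line induction on the recurrence closes the argument.
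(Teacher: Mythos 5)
Your proof is correct, and it takes a genuinely different route from the paper. The paper does not pass through the balancing sequence at all: it invokes Koshy's expansion $P_\ell = \sum_{i=1}^{\ell/2}\binom{\ell}{2i-1}2^{i-1}$, works modulo $8$ to show that $\nu_2(P_\ell - \ell) \geq 3$ for every even $\ell \geq 4$, and then deduces the lemma from $\nu_2(P_kQ_k - k) = \nu_2(P_{2k} - 2k) - 1$ via the identity $P_{2k} = 2P_kQ_k$. You instead rewrite $P_kQ_k$ as $B_k$ (the same Binet-form computation the paper uses elsewhere) and run a two-step induction on the recurrence $B_k = 6B_{k-1} - B_{k-2}$ modulo $4$, where $6 \equiv 2$ and $5k - 4 \equiv k$; the base cases $B_2 - 2 = 4$ and $B_3 - 3 = 32$ check out, so the argument closes. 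Your version is shorter and more self-contained, needing only the recurrence and the Binet forms rather than the binomial identity for $P_\ell$; the paper's version buys a strictly stronger intermediate fact, namely $P_\ell \equiv \ell \pmod{8}$ for even $\ell \geq 4$, which is what lets them peel off the factor of $2$ cleanly and could be reused elsewhere. Either proof suffices for the application in Theorem~\ref{thm:curl_b1_main_result}.
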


\begin{proof}
We first claim that if $\ell \geq 4$ is even, then $\nu_2(P_\ell - \ell) \geq 3$ holds. Observe that
    \begin{align}
        P_\ell &= \sum_{i=1}^{\ell/2} \binom{\ell}{2i-1} 2^{i-1}  \equiv \ell + \binom{\ell}{3} 2 + \binom{\ell}{5} 2^2 \pmodd{8},\label{eq:Pell_power_of_2_identity}
    \end{align}
where the first equality holds by~\cite[Identity~(9.10)]{Koshy2014}. For $\ell \geq 4$, we have $\binom{\ell}{3} = \frac{\ell \cdot (\ell - 1) \cdot (\ell - 2)}{2 \cdot 3}$ and thus $\nu_2\bigl(\binom{\ell}{3}\bigr) \geq 2$, so $\nu_2\bigl(\binom{\ell}{3} 2 \bigr) $$= \nu_2\bigl(\binom{\ell}{3}\bigr) + \nu_2(2)  \geq 2 + 1 = 3$ holds. Moreover for $\ell \geq 6$, we have $\binom{\ell}{5} = \frac{\ell \cdot (\ell - 1) \cdot (\ell - 2) \cdot (\ell - 3) \cdot (\ell - 4)}{2^3 \cdot 15}$ and thus $\nu_2\bigl(\binom{\ell}{5}\bigr) \geq 1$, so $\nu_2\bigl(\binom{\ell}{5} 2^2 \bigr) = \nu_2\bigl(\binom{\ell}{5}\bigr) + \nu_2(2^2)  \geq 1 + 2 = 3$ holds. Hence 8 divides both $\binom{\ell}{3} 2$ and $\binom{\ell}{5} 2^2$, and so Identity~\eqref{eq:Pell_power_of_2_identity} implies $P_\ell - \ell$ is also divisible by 8, and thus $\nu_2(P_\ell - \ell) \geq 3$, as desired. Now let $k \geq 2$ be given. Observe that
$$\nu_2(P_k Q_k - k) = \nu_2(2P_k Q_k - 2k) - 1 = \nu_2(P_{2k} - 2k) - 1 \geq 3 - 1 = 2,$$
where the second equality holds since $P_{2k} = 2P_k Q_k$.
\end{proof}

\begin{theorem}\label{thm:curl_b1_main_result}
For all $k \geq 1$, the GCD of all sums of $k$ consecutive cobalancing numbers is
\begin{align*}
    \curl{b}{} = \begin{cases}
  \gcd(P_k, k), & \text{if $k$ is even}; \\
   2\gcd(Q_k, k),  & \text{if $k$ is odd}.
\end{cases} 
\end{align*}
\end{theorem}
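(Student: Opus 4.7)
The plan is to invoke the intermediary formula from Theorem~\ref{thm:curl_b1_intermediary_result}, substitute $B_k = P_k Q_k$, and then compare $p$-adic valuations prime-by-prime on both sides. In both cases the key observation is that $P_k Q_k - k \equiv -k$ modulo either $P_k$ or $Q_k$, which immediately yields $\gcd(P_k Q_k - k, P_k) = \gcd(P_k, k)$ and $\gcd(P_k Q_k - k, Q_k) = \gcd(Q_k, k)$. I would then track how dividing $P_k Q_k - k$ by $2$ (and, in Case~II, multiplying $Q_k$ by $2$) affects these gcd's, using Lemmas~\ref{lem:Carl_is_Filipino}, \ref{lem:Jurgen_p_adic_lemma_1}, and \ref{lem:Jurgen_p_adic_lemma_2}, together with the well-known fact that $\gcd(P_k, Q_k) = 1$, which follows from the identity $Q_k^2 - 2 P_k^2 = (-1)^k$.

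For $k$ even, I must show $\gcd\bigl(\tfrac{1}{2}(P_k Q_k - k), P_k\bigr) = \gcd(P_k, k)$. At any odd prime $p$ the factor of $\tfrac{1}{2}$ is invisible, and a short case split on whether $p \mid P_k$ (using $\gcd(P_k, Q_k) = 1$, so that $\nu_p(P_k Q_k) = \nu_p(P_k)$ whenever $p \mid P_k$) forces the valuations on the two sides to agree. At $p = 2$, Lemma~\ref{lem:Jurgen_p_adic_lemma_1} gives $\nu_2(P_k) = \nu_2(k)$, and the desired valuation identity reduces to the strict inequality $\nu_2(P_k Q_k - k) > \nu_2(k)$. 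For $k$ odd, the analogous prime-by-prime check goes through: at odd primes one runs the same argument with the roles of $P_k$ and $Q_k$ interchanged, and at $p = 2$ the extra factor of $2$ appearing in $2 \gcd(Q_k, k)$ is supplied precisely by Lemma~\ref{lem:Jurgen_p_adic_lemma_2}, since $\nu_2\bigl(\tfrac{1}{2}(P_k Q_k - k)\bigr) \geq 1 = \nu_2(2 Q_k)$ while $\gcd(Q_k, k)$ is odd.

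The hard part is establishing the sharp bound $\nu_2(P_k Q_k - k) \geq \nu_2(k) + 1$ required in Case~I, which is strictly stronger than Lemma~\ref{lem:Jurgen_p_adic_lemma_2}. I would prove it by first rewriting $2(P_k Q_k - k) = P_{2k} - 2k$ via the duplication formula $P_{2k} = 2 P_k Q_k$, then expanding $P_{2k}$ using the binomial identity $P_\ell = \sum_{i=1}^{\ell/2} \binom{\ell}{2i-1} 2^{i-1}$ (valid for $\ell$ even) already employed in the proof of Lemma~\ref{lem:Jurgen_p_adic_lemma_2}. Writing $\binom{2k}{2i-1} = \tfrac{2k}{2i-1} \binom{2k-1}{2i-2}$ pulls a clean factor of $2k$ out of each binomial coefficient, so every term with $i \geq 2$ in the resulting sum has $2$-adic valuation at least $\nu_2(2k) + (i-1) \geq \nu_2(k) + 2$. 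Hence $\nu_2(P_{2k} - 2k) \geq \nu_2(k) + 2$, and dividing by $2$ yields the required bound.
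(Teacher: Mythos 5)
Your proposal is correct and follows the same skeleton as the paper's proof: reduce to Theorem~\ref{thm:curl_b1_intermediary_result}, substitute $B_k = P_kQ_k$, and compare $\nu_p$ of both sides prime by prime, with the odd primes handled by $\gcd(P_kQ_k - k, P_k) = \gcd(k, P_k)$ (resp.\ with $Q_k$) and the prime $2$ handled via Lemmas~\ref{lem:Jurgen_p_adic_lemma_1} and \ref{lem:Jurgen_p_adic_lemma_2}. The one place you diverge is the step you flag as ``the hard part'': the bound $\nu_2(P_kQ_k - k) \geq \nu_2(k) + 1$ for $k$ even. Your binomial route is valid --- writing $2(P_kQ_k - k) = P_{2k} - 2k$, noting the $i=1$ term of $\sum_{i\geq 1}\binom{2k}{2i-1}2^{i-1}$ is exactly $2k$, and extracting the factor $2k$ from each remaining $\binom{2k}{2i-1}$ does give $\nu_2(P_{2k}-2k) \geq \nu_2(k)+2$, which is in fact a sharper, $k$-dependent strengthening of Lemma~\ref{lem:Jurgen_p_adic_lemma_2}. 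But the inequality you want is not hard at all: since $Q_k$ is odd and $\nu_2(P_k) = \nu_2(k)$ by Lemma~\ref{lem:Jurgen_p_adic_lemma_1}, the two integers $P_kQ_k$ and $k$ have equal $2$-adic valuation, and any two integers of equal $2$-adic valuation have a difference of strictly larger valuation; this one-line observation is exactly how the paper concludes $\nu_2(P_kQ_k - k) > \nu_2(P_k)$, with no binomial expansion needed. So your proof is complete and even proves a marginally stronger auxiliary fact, at the cost of redoing, in a harder form, work that Lemma~\ref{lem:Jurgen_p_adic_lemma_1} already renders trivial.
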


\begin{proof}
By Theorem~\ref{thm:curl_b1_intermediary_result}, it suffices to show that $\gcd\left(\frac{1}{2}( B_k - k), P_k \right) = \gcd(P_k, k)$ when $k$ is even, and that $\gcd\left(\frac{1}{2}( B_k - k), 2 Q_k \right) = 2 \gcd(Q_k, k)$ when $k$ is odd.

\begin{itemize}
\item[] \textbf{Case I.} Suppose $k$ is even. We claim that $\nu_p\left(\gcd(\frac{1}{2}(B_k - k),P_k)\right) = \nu_p(\gcd(P_k, k))$ for all primes $p$. Suppose $p\neq 2$ and $j \geq 1$. Then the following sequence of biconditionals holds:
\begin{align*}
    p^j \text{ divides } \gcd\left( \frac{1}{2}(B_k - k), P_k \right)
        &\Longleftrightarrow p^j \text{ divides } P_k \text{ and } B_k - k\\
        &\Longleftrightarrow p^j \text{ divides } P_k \text{ and } k\\
        &\Longleftrightarrow p^j \text{ divides } \gcd(P_k, k),
\end{align*}
where the first biconditional holds since $p$ is an odd prime, and the second one holds since $B_k = P_k Q_k$. Thus $\nu_p\left(\gcd(\frac{1}{2}(B_k - k),P_k)\right) = \nu_p(\gcd(P_k, k))$.

On the other hand, suppose $p=2$. Since $Q_k$ is odd, $k$ is even, and $\nu_2 (P_k) = \nu_2 (k)$ by Lemma~\ref{lem:Jurgen_p_adic_lemma_1}, we have $\nu_2 (P_k Q_k - k) > \nu_2 (P_k)$. It follows that $\nu_2 (P_k Q_k - k) \geq \nu_2 (P_k) + 1$, and so $\nu_2 (P_k Q_k - k) - 1 \geq \nu_2 (P_k)$. Since $B_k = P_k Q_k$, then $\nu_2(B_k - k) = \nu_2(P_k Q_k - k)$, and thus $\nu_2\left(\frac{1}{2}(B_k -k)\right) = \nu_2(P_k Q_k -k) - 1 \geq \nu_2(P_k)$. By Identity~\eqref{eq:p_adic_prop_1} of Lemma~\ref{lem:Carl_is_Filipino}, we have
    \begin{align*}
        \nu_2\left(\gcd\left(\frac{1}{2}(B_k - k), P_k\right)\right) &= \min \left(\nu_2 \left ( \frac{1}{2}(B_k - k) \right ), \nu_2(P_k) \right) \\
        &= \nu_2(P_k) \\
        &= \min \left(\nu_2(P_k), \nu_2(k) \right)\\
        &= \nu_2 (\gcd(P_k, k)).
    \end{align*}

\item[] \textbf{Case II.} Suppose $k$ is odd. We claim that $\nu_p\left(\gcd(\frac{1}{2}(B_k - k), 2 Q_k)\right) = \nu_p(2 \gcd(Q_k, k))$ for all primes $p$. Suppose $p\neq 2$ and $j \geq 1$. Then the following sequence of biconditionals holds:
\begin{align*}
    p^j \text{ divides } \gcd\left( \frac{1}{2}(B_k - k), 2 Q_k \right)
        &\Longleftrightarrow p^j \text{ divides } Q_k \text{ and } B_k - k\\
        &\Longleftrightarrow p^j \text{ divides } Q_k \text{ and } k\\
        &\Longleftrightarrow p^j \text{ divides } \gcd(Q_k, k)\\
        &\Longleftrightarrow p^j \text{ divides } 2\gcd(Q_k, k),
\end{align*}
where the first and fourth biconditionals hold since $p$ is an odd prime, and the second one holds since $B_k = P_k Q_k$. Thus $\nu_p \left(\gcd(\frac{1}{2}( B_k - k), 2Q_k)\right) = \nu_p (2\gcd(Q_k,k))$.

On the other hand, suppose $p=2$. Since $Q_k$ and $k$ are odd, we have $\gcd(Q_k, k)$ is odd and hence $\nu_2 (2\gcd(Q_k, k))=1$. Since $B_k = P_k Q_k$, Lemma~\ref{lem:Jurgen_p_adic_lemma_2} implies that $\nu_2 (B_k - k) \geq 2$, and so $\nu_2 \left(\frac{1}{2}(B_k - k)\right) \geq 1$. By Identity~\eqref{eq:p_adic_prop_1} of Lemma~\ref{lem:Carl_is_Filipino}, we have
    \begin{align*}
        \nu_2 \left(\gcd\left(\frac{1}{2}(B_k - k), 2Q_k\right)\right) = \min \left( \nu_2 \left ( \frac{1}{2} (B_k - k) \right ), \nu_2 (2Q_k)\right) = 1,
    \end{align*}
where the last equality holds since $\nu_2 \left(\frac{1}{2}(B_k - k) \right) \geq 1$, and $Q_k$ being odd implies $\nu_2 (2Q_k)~=~1$. It follows that $\nu_2 \left(\gcd(\frac{1}{2}(B_k-k), 2Q_k)\right) = 1 = \nu_2 (2\gcd(Q_k, k))$.\qedhere
\end{itemize}
\end{proof}

\section{Further directions and open questions}\label{sec:further_results_and_open_questions}

\subsection{GCD of sums of \texorpdfstring{$k$}{k} consecutive squares}
Recall in Convention~\ref{conv:notation_for_curl_GCD_closed_forms}, we use the notation $\curl{S}{m}$ to denote the GCD of all sums of $k$ consecutive $m^{\mathrm{th}}$ powers of sequence terms. Based on evidence collected from the software \texttt{Mathematica}, we have the following closed forms for the GCD of all sums of $k$ consecutive squares of Pell and associated Pell numbers. We present this proof in a future paper.

\begin{result}
For all $k \geq 1$, the GCDs of all sums of $k$ consecutive squares of Pell and associated Pell numbers, respectively, are
$$\curl{P}{2} = \begin{cases}
  \frac{1}{2} P_k,  & \text{if $k$ is even}; \\
   1,  & \text{if $k$ is odd},
\end{cases}
\;\;\;\;\text{and}\;\;\;\;
\curl{Q}{2} = \begin{cases}
  P_k,  & \text{if $k$ is even}; \\
  1,  & \text{if $k$ is odd}.
\end{cases}
$$
\end{result}

\begin{question}
Can we find nice closed forms for $\curl{B}{2}$, $\curl{C}{2}$, $\curl{b}{2}$, and $\curl{c}{2}$? Thus far from evidence collected from \texttt{Mathematica}, we do have the following observations:
\begin{itemize}
\item For all $k \geq 1$, we have $
\curl{B}{2} = \begin{cases}
  \frac{1}{2} \curl{C}{2},  & \text{if $k$ is even}; \\
  \curl{C}{2},  & \text{if $k$ is odd}.
\end{cases}
$
\item For all $k \geq 1$, we have $
\curl{C}{2} = \begin{cases}
  \curl{c}{2},  & \text{if $k \not\equiv 3 \pmod{6}$}; \\
  \frac{1}{3} \curl{c}{2},  & \text{if $k \equiv 3 \pmod{6}$}.
\end{cases}
$
\item For all $k \geq 1$, we have $\curl{b}{2} \equiv 0 \pmod{4}$.
\end{itemize}
\end{question}

\subsection{The GCD of a sequence term and its index}

Recall that by Theorem~\ref{thm:curl_b1_main_result}, we found that
\begin{align*}
    \curl{b}{} = \begin{cases}
  \gcd(P_k, k),  & \text{if $k$ is even;} \\
   2\gcd(Q_k, k),  & \text{if $k$ is odd.}
\end{cases} 
\end{align*}
Unlike the other five GCD closed forms, $\curl{P}{}$, $\curl{Q}{}$, $\curl{B}{}$, $\curl{C}{}$, and $\curl{c}{}$, the formula for $\curl{b}{}$ is a not necessarily a closed form in the same sense, since we still need to compute both $\gcd(P_k, k)$ and $\gcd(Q_k, k)$. This leads to the following question.

\begin{question}
When $\GenericSeq$ is any of the six sequences $\PellSeq$, $\QellSeq$, $\BalSeq$, $\LucBalSeq$, $\CobalSeq$,  or $\LucCobalSeq$, can we find a closed form for the values $\gcd(S_k, k)$ for all $k \geq 1$?
\end{question}

When $\GenericSeq$ is the Pell or associated Pell sequence, we have partial results towards closed forms for $\gcd(S_k, k)$ that involve the \textit{entry point} (or \textit{rank of apparition}), $e_S(p)$, which is the smallest index $r>0$ such that $p$ divides $S_r$ where $p$ is a prime.

\begin{conjecture}
We claim that $\gcd(Q_k, k) > 1$ if and only if there exists a prime $p$ such that $p$ divides $k$ and the rank of apparition (or entry point), $e_Q(p)$ divides $k$. For example, $\gcd(Q_{21}, 21) = 7$ and for the prime $p=7$, we have $p$ divides $21$ and $e_Q(p) = 3$ divides $21$.
\end{conjecture}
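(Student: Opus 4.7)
The plan is to reduce the conjecture to a structural description of the zero-set $Z_Q(p) := \{n \geq 1 : p \mid Q_n\}$ for each odd prime $p$. The key lemma I would prove first is the following: if $p$ is an odd prime and $e_Q(p) = r$ exists, then $p \mid Q_n$ if and only if $n$ is an odd multiple of $r$, that is, $n \in \{r, 3r, 5r, \ldots\}$. (The case $p = 2$ is vacuous, since every $Q_n$ is odd, so $e_Q(2)$ never exists.)

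To establish this lemma, I would work modulo $p$ in the quadratic extension $\mathbb{F}_p[\sqrt{2}]$ using the Binet formula. Since $\gamma \delta = -1$, we have $2 Q_n \equiv \gamma^n + \delta^n \pmod{p}$, so $p \mid Q_n$ is equivalent to $(\gamma \delta^{-1})^n \equiv -1$. Set $\rho := \gamma \delta^{-1} = -\gamma^2$. The hypothesis $e_Q(p) = r$ says that $r$ is the smallest positive exponent with $\rho^r \equiv -1$; a short order argument then forces the multiplicative order of $\rho$ to be exactly $2r$, so that $\rho^n \equiv -1$ iff $n \equiv r \pmod{2r}$, i.e., iff $n$ is an odd multiple of $r$.

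With this lemma the forward implication is immediate. If $\gcd(Q_k, k) > 1$, pick any prime $p$ dividing this GCD. Then $p \mid Q_k$ with $k \geq 1$ forces $e_Q(p)$ to exist; calling it $r$, the lemma gives $k = (2m+1) r$ for some $m \geq 0$, so in particular $e_Q(p) \mid k$ and $p \mid k$, providing the desired witness $p$.

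The backward implication is the subtle one, and I expect it to be the main obstacle. Assume $p \mid k$ and $r := e_Q(p) \mid k$. To conclude $p \mid Q_k$, the lemma requires $k/r$ to be odd, equivalently $\nu_2(k) = \nu_2(r)$. The hypotheses $p \mid k$ and $r \mid k$ do not obviously yield this 2-adic matching; small cases such as $k = 12$ with $p = 3$ (where $r = 2$) already suggest that extra parity information is needed. I expect the fix to come from the companion identity $e_P(p) = 2 e_Q(p)$ (when the latter exists), combined with classical laws relating $\nu_2(e_P(p))$, $\nu_2(k)$, and $\nu_2(P_k)$ in the spirit of Sanna's theorem already invoked in Lemma~\ref{lem:Jurgen_p_adic_lemma_1}. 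In particular, a careful 2-adic analysis should either (i) extract the parity of $k/r$ from the hypothesis $p \mid k$, confirming the conjecture; or (ii) identify precisely which hypothesis is missing and lead to a corrected right-hand side of the form ``$p \mid k$, $e_Q(p) \mid k$, and $k/e_Q(p)$ is odd.'' Either way, the structural lemma above reduces the entire conjecture to a clean 2-adic bookkeeping problem about entry points, which is the step I would prioritize.
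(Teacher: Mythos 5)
The paper offers no proof of this statement: it is presented as an open conjecture, so there is nothing to compare your argument against. Judged on its own terms, your structural lemma is correct and well proved: for an odd prime $p$ with $e_Q(p)=r$, writing $\rho=\gamma\delta^{-1}$ one gets $\mathrm{ord}(\rho)=2r$ and hence $p\mid Q_n$ exactly when $n$ is an odd multiple of $r$ (and $p=2$ is indeed vacuous since every $Q_n$ is odd). Your forward implication is complete and correct.

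The backward implication, however, is not merely the ``subtle'' half --- it is false as stated, and the test case you yourself flagged is already a genuine counterexample rather than a mere warning sign. Take $k=12$ and $p=3$: since $Q_2=3$ we have $e_Q(3)=2$, so $p\mid k$ and $e_Q(p)\mid k$, and the conjectured right-hand side holds; but $Q_{12}=19601$ is odd and has digit sum $17$, so $\gcd(Q_{12},12)=1$ and the left-hand side fails. This is exactly what your lemma predicts, because $12/2=6$ is even. So no amount of $2$-adic bookkeeping will rescue option (i) of your plan; the correct resolution is your option (ii): the conjecture should be restated as ``$\gcd(Q_k,k)>1$ if and only if there is a prime $p$ with $p\mid k$, $e_Q(p)\mid k$, and $k/e_Q(p)$ odd,'' which then follows immediately from your lemma in both directions. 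You should commit to that conclusion explicitly --- as written, your proposal stops short of deciding whether the statement is true, when in fact your own machinery already refutes it.
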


\section{Acknowledgments}
The authors acknowledge Florian Luca for his observation to Mbirika regarding Identity~\eqref{eq:Luca_observation} and the known result Identity~\eqref{eq:Luca_known_result} in the Fibonacci setting. Generalizing the latter two identities to the Pell, balancing, and related sequences greatly simplifies many of the proofs in our paper.
\begin{center}
    \includegraphics[width=4.5in]{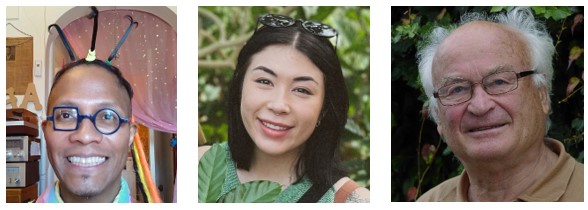}\\
    {\scriptsize aBa Mbirika, Janee Schrader, and J\"urgen Spilker}
\end{center}

\bigskip
\hrule
\bigskip

\noindent 2010 {\it Mathematics Subject Classification}:
Primary 11B39, Secondary 11A05.

\noindent \emph{Keywords: }
Pell sequence, associated Pell sequence, balancing sequence, Lucas-balancing sequence, cobalancing sequence, Lucas-cobalancing sequence, greatest common divisor.

\bigskip
\hrule
\bigskip

\noindent (Concerned with sequences
\seqnum{A000129}, \seqnum{A001333}, \seqnum{A002203},  \seqnum{A053141}, \seqnum{A001541}, \seqnum{A001541}, and \seqnum{A002315}.)

\bigskip
\hrule
\bigskip

\vspace*{+.1in}
\noindent
Received  February 19 2023;
revised versions received  June 13 2023; June 15 2023.
Published in {\it Journal of Integer Sequences}, June 17 2023.

\bigskip
\hrule
\bigskip

\noindent
Return to \href{https://cs.uwaterloo.ca/journals/JIS/}{Journal of Integer Sequences home page}.
\vskip .1in

\end{document}